\documentclass{article}
\usepackage{amssymb}
\usepackage{graphicx}
\usepackage{amsthm}
\usepackage{amsmath,amstext,amsopn}
 
\usepackage{color}
\setcounter{page}{1}

\newcommand{\Rd}{{\mathbb{R}^d}}
\newcommand{\RR}{{\mathbb{R}}}

\newcommand{\R}{\mathbb{R}}
\newcommand{\C}{\mathbb{C}}

\newcommand{\E}{\mathbb{E}}
\newcommand{\A}{\mathcal{A}}
\newcommand{\HH}{{\mathcal{H}}}
\newcommand{\re}{{\rm{Re\ }}}

\newtheorem{thm}{Theorem}
\newtheorem{lem}[thm]{Lemma}
\newtheorem{prop}[thm]{Proposition}
\newtheorem{cor}[thm]{Corollary}
\newtheorem{example}[thm]{Example}
\newtheorem{defi}[thm]{Definition}
\newtheorem{rem}[thm]{Remark}
\definecolor{kb}{rgb}{0,0,0}

\newcommand{\kb}[1]{{\textcolor{kb}{#1}}}

\begin{document}
\title{On Hardy
spaces of local and nonlocal operators\footnotetext{KB was partially supported by MNiSW through grant N N201 397137.
BD was supported by MNiSW through grant N N201 397137 and 
by the DFG through SFB-701 `Spectral Structures and Topological Methods in Mathematics'.
TL was supported by Agence Nationale de la Recherche through grant ANR-09-BLAN-0084-01 and by MNiSW through grant N N201 373136.
Affiliations:
KB: Department of Mathematics, Stanford University,
450 Serra Mall, Stanford, CA 94305, USA, and
Institute of Mathematics of Polish Academy of Sciences, and
Institute of Mathematics and Computer Science, Wroc{\l}aw University of Technology, 
bogdan@pwr.wroc.pl;
BD: 
Faculty of Mathematics, University of Bielefeld, and
Institute of Mathematics and Computer Science, Wroc{\l}aw University of Technology
Wybrze\.ze Wys\-pia\'n\-skiego 27,
50-370 Wroc{\l}aw, Poland, bdyda@pwr.wroc.pl;
TL:
Department of Statistics and Probability, Michigan State University,
619 Red Cedar Road, C413 Wells Hall,
East Lansing MI 48824-1027, USA, and
Laboratoire de Math\'ematiques, Universit\'e d'Angers, tluks@msu.edu
\\
MSC 2010: Primary 60J75; Secondary 60J50, 42B30, 31B25;
Keywords: fractional Laplacian, Hardy spaces,
conditional stable L\'evy process
}
}

\author{Krzysztof Bogdan
and Bart{\l}omiej Dyda
and Tomasz Luks
\footnotetext{
}
}

\maketitle
\begin{abstract}
We characterize (conditional) Hardy spaces 
of the Laplacian and the fractional Laplacian by using Hardy-Stein type identities.
\end{abstract}
\section{Introduction}\label{sec:intro}
\kb{We fix an arbitrary open set} 
$D\subset \Rd$ 
and a \kb{point}  $x_0\in D$.
For $p>0$ and $0<\alpha<2$ we consider the Hardy space $\HH^p(D,\alpha)$
of the fractional Laplacian $\Delta^{\alpha/2}$. \kb{Here}
\begin{equation} \label{eq:laplacian}
  \Delta^{\alpha / 2} 
u(x) =
 \lim_{\eta \rightarrow 0^+} 
     \int_{|y-x| > \eta}\!\!\!\!\!\! \A\,
    \frac{u(y) - u(x)}{|y-x|^{d+\alpha}}\,dy\,,
\end{equation}
${\mathcal   A}=\Gamma((d+\alpha)/2)/(2^{-\alpha}\pi^{d/2}|\Gamma(-\alpha/2)|)$ and
$\HH^p(D,\alpha)$ is defined as follows. Let $X$ be the isotropic $\alpha$-stable  L\'evy process,  
i.e. the symmetric L\'evy process on $\Rd$  with the L\'evy measure $\nu(dy)={\mathcal A}|y|^{-d-\alpha}dy$ and zero Gaussian part (\cite{MR2569321}).
Let $\E_x$ be the expectation for $X$ starting at $x\in \Rd$.
We define $$\tau_D=\inf\{t 
\kb{>0}: X_t\not\in D\},$$ the
first exit time of $X$ from $D$. 
A Borel function $u:\R^d\to\R$ is called $\alpha$--harmonic
on $D$ if for every open $U$ relatively compact in $D$ (denoted $U\subset\subset D$) we have
\begin{equation}
  \label{eq:eq:dharm}
 u(x) = \E_x u(X_{\tau_U})\,,\quad x\in U\, .
\end{equation}
We assume that the expectation is
absolutely convergent, in particular--finite.
Equivalently, $u$ is $\alpha$--harmonic on $D$ if $u$ is twice continuously differentiable on $D$, $\int_\Rd |u(y)|(1+|y|)^{-d-\alpha}dy<\infty$, and 
\begin{equation}
  \label{eq:dh}
\Delta^{\alpha / 2} u(x) = 0\,,\quad x \in D\,.
\end{equation}
We refer to \kb{\cite{MR1671973, MR2365478, MR1687746, MR1654115}} for this characterization and detailed
discussion of $\alpha$-harmonic functions, including structure theorems for nonnegative $\alpha$-harmonic functions, and explicit formulas for the Green function, Poisson kernel and Martin kernel of $\Delta^{\alpha/2}$ for the ball.  
\kb{The equivalence of various notions of harmonicity for more general Markov processes is proved in \cite{MR2515419}.}
 We also refer to 
\cite[p. 120]{MR2320691}, \kb{which shows by means of an example} 
why the mean value property (\ref{eq:eq:dharm}) is preferred over analogues of (\ref{eq:dh}) for harmonic functions of 
Markov processes.
The reader may verify, using
(\ref{eq:eq:dharm}) and the strong Markov property of $X$, that $\{u(X_{\tau_U})\}_{U\subset
  \subset D}$ is a martingale ordered by inclusion of sets $U$. In particular,
$\E_{x} |u(X_{\tau_U})|^p$ is non-decreasing in $U$, if $p\geq 1$.

\begin{defi} Let $0<p<\infty$.
We write $u\in \HH^p=\HH^p(D,\alpha)$, if $u$ is $\alpha$-harmonic on $D$ and
\begin{equation}
  \label{eq:2}
\|u\|_p:=\sup_{U\subset\subset D} \left(\E_{x_0} |u(X_{\tau_U})|^p\right)^{1/p}<\infty\,. 
\end{equation}
\end{defi}
\noindent
	The finiteness condition does not depend on the choice of $x_0\kb{\in D}$, because the function $U\ni x\mapsto \E_{x} |u(X_{\tau_U})|^p$ satisfies Harnack inequality for arbitrary (Borel) function $u$, \kb{see \cite[p.~17]{bib:Rm} or \cite[Lemma~2.1]{MR704544}.}
If $p\leq q$, then $\HH^p\supset \HH^q$. 

We say that nonnegative functions $f(u)$ and $g(u)$ are {\it comparable}, and write $f(u)\asymp g(u)$, if 
numbers $0<c\leq C<\infty$ exist such that
$cf(u)\leq g(u)\leq C f(u)$ for every $u$.

Let $G_D(x,y)$ be the Green function of $\Delta^{\alpha/2}$ for the Dirichlet problem on $D$.
\kb{The function is defined as follows. We let
\begin{align*}
  p_t(x)&=(2\pi)^{-d}\int_ \Rd e^{-t|\xi|^\alpha}e^{ix\cdot\xi}\,d\xi\,,\quad t>0\,,\;x\in
  \Rd\,,
\end{align*}
so that $p_{t}(y-x)$ is the time-homogeneous transition density function of $X$.
Then we use Hunt's formula to define the Dirichlet heat kernel of $\Delta^{\alpha/2}$ for $D$:
\begin{align*}
p_D(t,x,y)&=p_t(y-x)-\E^x[\tau_D<t;\, p_{t-\tau_D}(y-X_{\tau_D})],\quad t>0
,\,x,y\in \Rd \,,
\end{align*}
cf. \cite[Section~2.2]{MR1329992}
or \cite[Section~3]{MR3050510}. Finally, we let
\begin{align*}
G_D(x,y)&=\int_0^\infty p_D(t,x,y)dt, \quad x,y\in \Rd\,.
\end{align*}
It may happen that 
$G_D\equiv \infty$ on $D$. This is the case, e.g., 
if $d=1\le \alpha$ and $D=(-\infty, \infty)$. 
Such sets $D$ are not excluded from our considerations.
We also remark that purely analytic definition of $G_D$ may be found in \cite{MR0350027}. 
}

The reader may notice that (\ref{eq:2}) is far from being explicit because it involves the distribution of $X_{\tau_U}$ for all $U\subset D$. The following result and the exact formula for $\|u\|_p$ given in (\ref{eq:HpD}) below simplify this perspective.
\begin{thm}\label{thm:t1}
 If $1<p<\infty$, then $\|u\|^p_p$ is {\rm comparable} on
  ${\mathcal H}^p$ with
    \begin{equation}
      \label{eq:3}
|u(x_0)|^p+
\int_D G_D(x_0,y)\int_\Rd 
\frac{[u(z)-u(y)]^2 \,\big(\,|u(z)|\vee |u(y)| \,\big)^{p-2}}{|z-y|^{d+\alpha}}\,dz\,dy.
    \end{equation}
In fact, $u\in {\mathcal H}^p$ if and only if $u$ is $\alpha$-harmonic
in $D$ and the integral is finite. 
\end{thm}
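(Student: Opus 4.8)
The plan is to prove a Hardy--Stein identity expressing $\E_{x_0}|u(X_{\tau_U})|^p$ through the nonlocal operator applied to $|u|^p$, and then to estimate that operator by the inner integral in \eqref{eq:3}. Concretely, I would first show that for every $U\subset\subset D$,
\[
\E_{x_0}|u(X_{\tau_U})|^p
= |u(x_0)|^p
+ \int_U G_U(x_0,y)\,\Delta^{\alpha/2}\big[|u|^p\big](y)\,dy ,
\]
where $G_U$ is the Green function of $\Delta^{\alpha/2}$ on $U$. This is the Dynkin formula applied to $t\mapsto|u(X_{t\wedge\tau_U})|^p$: since $u(X_{t\wedge\tau_U})$ is a martingale and $s\mapsto|s|^p$ is convex, the process is a submartingale whose compensator is the additive functional $\int_0^{\cdot}\Delta^{\alpha/2}[|u|^p](X_s)\,ds$; taking $\E_{x_0}$ and using $\E_{x_0}\int_0^{\tau_U}f(X_s)\,ds=\int_U G_U(x_0,y)f(y)\,dy$ gives the identity.

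The second step is to compute $\Delta^{\alpha/2}[|u|^p]$ using the $\alpha$-harmonicity of $u$. Writing $\phi(t)=|t|^p$ and $\phi'(t)=p|t|^{p-2}t$, the linear term drops out because $\Delta^{\alpha/2}u=0$ on $D$, so for $y\in D$,
\[
\Delta^{\alpha/2}[|u|^p](y)
= \A\int_{\Rd}\frac{\phi(u(z))-\phi(u(y))-\phi'(u(y))\big(u(z)-u(y)\big)}{|z-y|^{d+\alpha}}\,dz .
\]
The key pointwise estimate, valid for all $a,b\in\R$ with constants depending only on $p>1$, is the Bregman-type comparison
\[
\phi(b)-\phi(a)-\phi'(a)(b-a)\ \asymp\ (b-a)^2\,\big(|a|\vee|b|\big)^{p-2},
\]
which follows from $\phi''(s)=p(p-1)|s|^{p-2}$ and Taylor's formula with integral remainder (the two sides even coincide at $a=0$ and at $p=2$). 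Inserting this into the display above identifies $\Delta^{\alpha/2}[|u|^p](y)$ as a nonnegative quantity comparable to the inner $z$-integral in \eqref{eq:3}.

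It then remains to let $U\uparrow D$. Because the compared integrand is nonnegative and $G_U(x_0,y)\uparrow G_D(x_0,y)$ as $U\uparrow D$, monotone convergence yields
\[
\|u\|_p^p=\sup_{U\subset\subset D}\E_{x_0}|u(X_{\tau_U})|^p
= |u(x_0)|^p+\int_D G_D(x_0,y)\,\Delta^{\alpha/2}[|u|^p](y)\,dy,
\]
and the pointwise comparison propagates to the full expression, giving $\|u\|_p^p\asymp\eqref{eq:3}$. The same monotone-limit argument shows that finiteness of the integral in \eqref{eq:3} is equivalent to $\sup_U\E_{x_0}|u(X_{\tau_U})|^p<\infty$, i.e.\ to $u\in\HH^p$, which is the final assertion.

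I expect the main obstacle to be the rigorous justification of the Dynkin/Hardy--Stein identity when $\phi(t)=|t|^p$ is not $C^2$. For $1<p<2$ the second derivative is singular at the origin, so I would approximate $\phi$ by smoothed powers $\phi_\varepsilon$ (for instance regularizing $|t|$ by $\sqrt{t^2+\varepsilon^2}$), apply Dynkin to $\phi_\varepsilon\circ u$, and pass to the limit $\varepsilon\to0$ using the integral-remainder representation to control the error monotonically; the comparison estimate above is precisely what guarantees the relevant remainders are nonnegative and increase to the limiting integrand. A secondary difficulty is integrability of the kernel: the second-order cancellation makes the singularity at $z=y$ integrable for almost every $y$, while the hypothesis $\int_{\Rd}|u(y)|(1+|y|)^{-d-\alpha}\,dy<\infty$ together with the decay of $G_D(x_0,\cdot)$ controls the tails, so that each step---and in particular the interchange of the $\varepsilon\to0$ and $U\uparrow D$ limits---is legitimate.
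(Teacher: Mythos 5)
Your proposal is correct and follows essentially the same route as the paper: the Green-function/Dynkin identity for $\phi=|u|^p$, the computation of $\Delta^{\alpha/2}|u|^p$ as the Bregman remainder $F(u(y),u(z))=|u(z)|^p-|u(y)|^p-pu(y)|u(y)|^{p-2}(u(z)-u(y))$ using $\Delta^{\alpha/2}u=0$, the two-sided estimate $F(a,b)\asymp (b-a)^2\big(|a|\vee|b|\big)^{p-2}$ via the integral form of the Taylor remainder, the $\sqrt{t^2+\varepsilon^2}$ regularization for $1<p<2$ (which the paper writes as $|u+i\varepsilon|^p$), and monotone convergence as $U\uparrow D$. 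One small correction: the regularized remainders $F_\varepsilon$ are \emph{not} in general monotone in $\varepsilon$, so the passage $\varepsilon\to 0$ should be justified, as in the paper, by Fatou's lemma together with dominated convergence based on the uniform bound $F_\varepsilon(a,b)\leq c\,(b-a)^2\big(|a|\vee|b|\big)^{p-2}$, rather than by monotone convergence.
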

\kb{Incidentally, if $G_D\equiv \infty$ on $D$, $1<p<\infty$ and $u\in {\mathcal H}^p$, then $u$ must be constant on $D$.}
We also describe \kb{conditional} Hardy spaces $\HH^p_h=\HH^p_h(D,\alpha)$,
where $h$ is a fixed \kb{$\alpha$-}harmonic function 
positive on $D$ and vanishing on $D^c$. 
The class 
$\HH^p_h$ is of
considerable interest because it directly relates to {\it ratios} of $\alpha$-harmonic functions, \kb{weighted $L^p$ integrability of $\alpha$-harmonic functions and Doob's $h$-transform}. 
\kb{We note in passing that Doob's conditioning also plays an important role in the study of}
the relative Fatou theorem for $\alpha$-harmonic functions \cite{MR2114264, MR2214140, MR1980119}, and in the theory of conditional \kb{$\alpha$-}stable \kb{L\'evy} processes \cite{MR1671973,MR2256481}.

We 
give similar characterizations for Hardy spaces of the classical Laplacian $\Delta$, \kb{too}: formula (\ref{eq:chp}) below is the celebrated Hardy-Stein identity but Theorem~\ref{thm:chp2}, which may be considered a {\it conditional} Hardy-Stein identity, is 
new, and may be interesting for its own sake. 

The paper is composed as follows.
In Section~\ref{sec:hp} we observe the formula
  \begin{equation}
    \label{eq:chH2}
\sup_{U\subset\subset D} \E_{x_0} u^2(X_{\tau_U})= |u(x_0)|^2+
\int_D G_D(x_0,y)\int_\Rd \!\A\,\frac{[u(z)-u(y)]^2}{|z-y|^{d+\alpha}}\,dz\,dy\,,
  \end{equation}
for the norm of $\HH^2$, and we extend  it  in Lemma~\ref{lem:lF} and Theorem~\ref{thm:t1} to $\HH^p$ for $p>1$. 
	The conditional Hardy spaces $\HH^p_h$ are characterized in Lemma~\ref{lem:rhp}, Theorem~\ref{thm:t2} and formula (\ref{eq:HpDh})  in Section~\ref{sec:hhp}, see also Remark~\ref{rem:cHs}. 
In Section~\ref{sec:clas} we state the results for the Laplacian: 
formula (\ref{eq:chp}) and Theorem~\ref{thm:chp2}. 
In Section~\ref{sec:F} we describe the norm of the Hardy spaces in terms of the Krickeberg decomposition for $p\geq 1$, and we prove a classical Littlewood-Paley inequality.
 
Formula (\ref{eq:chH2}) and its modifications (\ref{eq:HpD}, \ref{eq:HpDh}, \ref{eq:cHS}) below are the main subject of the paper, and they may be considered nonlocal or conditional extensions of the classical Hardy-Stein equality, for which we refer the reader to (\ref{eq:chp}) in Section~\ref{sec:clas} and \kb{to} \cite{MR1921096, 2009-Pavlovic, MR2575382}.

Our work was motivated by the notion of the quadratic variation of martingales, \kb{operator} carr\'e du champ, and the characterization of the classical and martingale Hardy and Bergman spaces (\cite{MR750829, MR1885764, MR2273672, MR2575382, MR2279769, MR1921096, 2011-TL-sm, MR1320508}).
The resulting technique should apply to Hardy spaces of operators and Markov processes much more general than the fractional Laplacian and the isotropic stable L\'evy process. The style of the presentation and the inclusion of both jump and continuous processes in the present paper is intended to clarify the methodology and indicate such extensions. Our development is \kb{mostly} analytic. In fact, the definitions of the Hardy spaces can be easily formulated analytically by using the harmonic measures of the Laplacian and the fractional Laplacian (\cite{MR850715,MR0350027}). A clarifying comparison of the conditional and the non-conditional cases is made at the end of Section~\ref{sec:clas}.

\section{Characterization of $\HH^{p}$}\label{sec:hp}

Consider an open \kb{set} $U\subset \subset D$ and a real-valued function $\phi:\,\Rd\to \R$ which
is $C^2$ in a neighborhood of $\overline{U}$ and satisfies
$\int_\Rd |\phi(y)|(1+|y|)^{-d-\alpha}\kb{dy}<\infty$.
Then $\Delta^{\alpha/2}\phi$ is bounded on $\overline{U}$, and
for every $x\in \Rd$ we have
\begin{equation}\label{eq:rphi}
\phi(x)=\E_x \phi(X_{\tau_U})-\int_U G_U(x,y) \Delta^{\alpha/2}\phi(y)\,dy\,.
\end{equation}
\kb{
Indeed, if $\phi$ is compactly supported and smooth in $\Rd$, i.e. it is
a {\it test function}, then \eqref{eq:rphi} follows from Dynkin\rq{}s formula, see also a brief semi-analytic proof given in \cite[Lemma~8 with $b=0$]{MR2892584}. 
For arbitrary function $\phi$ satisfying the assumptions stated before \eqref{eq:rphi},
let test functions $\phi_n$ converge to $\phi$
in 
$L^1(\Rd,(1+|y|)^{-d-\alpha}dy)$ and 
in $C^2$ on a neighborhood of $\overline{U}$. Then $\Delta^{\alpha/2}\phi_n\to \Delta^{\alpha/2}\phi$ uniformly on $U$ because we can use Taylor expansion 
with remainder 
of the second-order for the integrand   in \eqref{eq:laplacian} in a neighborhood of $\overline{U}$, and we also have $\Delta^{\alpha/2}u(x)=   \int_{U^c}
    u(y)\A |y-x|^{-d-\alpha}dy$ if $x\in U$ and $u$ is supported in $U^c$. We also note that the distribution of 
$X_{\tau_U}$ for the process $X$ starting at $x$ has the density function $z\mapsto \int_U G_U(x,y)\A |z-y|^{-d-\alpha}dy$  in the complement of each neighborhood of $\overline{U}$.
The fact is  known
as the Ikeda-Watanabe formula and follows immediately from \eqref{eq:rphi} for test functions (we also refer to \cite{MR0142153} for the original contribution and to \cite[Lemma~6 with $b=0$]{MR2892584} for a brief semi-analytic proof). 
We note that $\int_U G_U(x,y)\A |z-y|^{-d-\alpha}dy\le c(1+|z|)^{-d-\alpha}$ in the complement of each neighborhood of $\overline{U}$, see also  \cite[Lemma~7]{MR1438304}.
By bounded convergence in a neighborhood of $\overline{U}$ and by $L^1$ convergence elsewhere
we extend 
\eqref{eq:rphi} from $\phi_n$ to $\phi$.
}
\kb{
The reader interested in proving \eqref{eq:rphi} by means of the maximum principle of $\Delta^{\alpha/2}$ may also do so by using
\cite[Lemma 3.8 and the proof of Theorem
3.9]{MR1671973}.}
\begin{lem}\label{lem:H2U}
  If $u$ is $\alpha$-harmonic on $D$ and $U\subset\subset D$, then 
 \begin{equation}
    \label{eq:H2U}
    \E_{x_0}u^2(X_{\tau_U})
=u(x_0)^2+\int_U G_U(x_0,y)\int_\Rd \!\A\,\frac{[u(z)-u(y)]^2}{|z-y|^{d+\alpha}}\,dz\,dy\,.
  \end{equation}
\end{lem}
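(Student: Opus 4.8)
The plan is to apply the representation formula (\ref{eq:rphi}) to the function $\phi=u^2$ and to read off (\ref{eq:H2U}) once $\Delta^{\alpha/2}(u^2)$ has been identified on $U$. Since $u$ is $\alpha$-harmonic, the characterization recalled after (\ref{eq:eq:dharm}) gives that $u$ is $C^2$ on $D$, hence $u^2$ is $C^2$ in a neighborhood of $\overline{U}\subset D$, so the local hypothesis of (\ref{eq:rphi}) is met. Evaluating (\ref{eq:rphi}) at $x=x_0\in U$ and rearranging would yield
\begin{equation*}
\E_{x_0}u^2(X_{\tau_U})=u(x_0)^2+\int_U G_U(x_0,y)\,\Delta^{\alpha/2}(u^2)(y)\,dy,
\end{equation*}
so everything reduces to showing $\Delta^{\alpha/2}(u^2)(y)=\int_\Rd \A\,[u(z)-u(y)]^2|z-y|^{-d-\alpha}\,dz$ for $y\in U$.

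For this pointwise identity I would use the elementary decomposition
\begin{equation*}
u^2(z)-u^2(y)=[u(z)-u(y)]^2+2u(y)[u(z)-u(y)],\qquad y,z\in\Rd.
\end{equation*}
Inserting this into the principal-value definition (\ref{eq:laplacian}) of $\Delta^{\alpha/2}(u^2)(y)$ splits the integral into two pieces. The first, with integrand $\A[u(z)-u(y)]^2|z-y|^{-d-\alpha}$, is nonnegative and, since $u$ is $C^2$ near $y$, behaves like $|z-y|^{2-d-\alpha}$ as $z\to y$; hence it is absolutely convergent in a neighborhood of $y$ (here $\alpha<2$ is used) and needs no principal value. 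The second piece equals $2u(y)\,\Delta^{\alpha/2}u(y)$, which vanishes because $u$ is $\alpha$-harmonic on $D$ by (\ref{eq:dh}). This gives the desired expression for $\Delta^{\alpha/2}(u^2)(y)$.

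The main difficulty is purely analytic: the global hypothesis of (\ref{eq:rphi}), namely $\int_\Rd u^2(y)(1+|y|)^{-d-\alpha}\,dy<\infty$, need not hold for a general $\alpha$-harmonic $u$, and the inner integral in (\ref{eq:H2U}) may be infinite. To treat this uniformly I would fix $V$ with $U\subset\subset V\subset\subset D$ and approximate $u^2$ from below by $\phi_k:=u^2$ on $V$ and $\phi_k:=u^2\wedge k$ on $V^c$; each $\phi_k$ is $C^2$ near $\overline{U}$ and satisfies the integrability hypothesis because $\int_{V^c}(1+|y|)^{-d-\alpha}\,dy<\infty$. Applying (\ref{eq:rphi}) and the computation above to each $\phi_k$ and then letting $k\to\infty$, the left-hand sides increase to $\E_{x_0}u^2(X_{\tau_U})$ and the right-hand sides increase to the claimed expression by monotone convergence; here one uses that the singularity sits at $z=y\in V$, so on $V^c$ the integrands $\phi_k(z)-u^2(y)$ increase to $u^2(z)-u^2(y)$ while staying above the fixed integrable minorant $-u^2(y)$, and that $\int_U G_U(x_0,y)\,dy=\E_{x_0}\tau_U<\infty$. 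This truncation-and-monotone-convergence step, rather than the algebra, is where the real care is required, and it simultaneously covers the case in which both sides of (\ref{eq:H2U}) are infinite.
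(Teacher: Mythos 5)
Your core argument coincides with the paper's: apply (\ref{eq:rphi}) to $\phi=u^2$, use the identity $u^2(z)-u^2(y)-2u(y)[u(z)-u(y)]=[u(z)-u(y)]^2$, and invoke $\Delta^{\alpha/2}u=0$ to identify $\Delta^{\alpha/2}(u^2)(y)$ with the nonnegative kernel integral. Where you genuinely diverge is the degenerate case $\int_\Rd u^2(y)(1+|y|)^{-d-\alpha}dy=\infty$. The paper disposes of it head-on: it notes that then $\int_\Rd[u(z)-u(y)]^2|z-y|^{-d-\alpha}dz=\infty$ for every $y$, and that $\E_{x_0}u^2(X_{\tau_U})=\infty$ as well, because the law of $X_{\tau_U}$ has a density bounded below by a multiple of $(1+|y|)^{-d-\alpha}$ away from a neighborhood of $\overline{U}$ (citing \cite{MR1438304}); after that it may simply assume weight-integrability of $u^2$ and apply (\ref{eq:rphi}) once. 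Your truncation $\phi_k$ plus monotone convergence handles the finite and infinite cases in a single stroke and avoids that probabilistic lower bound on the exit distribution entirely; the price is a more delicate limiting argument. Both routes are valid: the paper's is shorter but imports an external estimate, yours is more self-contained.

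One step in your scheme needs more care than you give it: you cannot apply ``the computation above'' verbatim to $\phi_k$, since the naive decomposition of $\phi_k(z)-\phi_k(y)$ into $[u(z)-u(y)]^2+2u(y)[u(z)-u(y)]$ minus the defect $u^2(z)-\phi_k(z)$ risks an $\infty-\infty$ cancellation precisely when $u^2$ fails weight-integrability (both the square term and the defect term can have divergent integrals). The correct bookkeeping, which your parenthetical remark gestures at, is to split the principal value at $V$: for $y\in U$, the part over $V$ is a genuine principal value of $u^2$-increments, where the algebraic identity applies and the restricted principal value of the cross term equals $-\int_{V^c}\A\,[u(z)-u(y)]\,|z-y|^{-d-\alpha}dz$, which is absolutely convergent because $\alpha$-harmonicity includes $\int_\Rd|u(y)|(1+|y|)^{-d-\alpha}dy<\infty$; the part over $V^c$ increases, by monotone convergence with your minorant $-u^2(y)$, to $\int_{V^c}\A\,[u^2(z)-u^2(y)]\,|z-y|^{-d-\alpha}dz$. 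Recombining, the two cross terms sum to $2u(y)\Delta^{\alpha/2}u(y)=0$ (their finiteness makes the rearrangement legitimate even when the square integrals are infinite), so $\Delta^{\alpha/2}\phi_k(y)$ increases to $\int_\Rd\A\,[u(z)-u(y)]^2|z-y|^{-d-\alpha}dz$ in $[0,\infty]$. With that identification, and with the uniform minorant $\Delta^{\alpha/2}\phi_1$ (bounded on $\overline{U}$) together with $\int_U G_U(x_0,y)\,dy<\infty$ justifying monotone convergence in the outer integral, your proof closes correctly.
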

\begin{proof}
  If $\int_\Rd u(y)^2(1+|y|)^{-d-\alpha}\kb{dy}=\infty$, then 
$\int_\Rd [u(z)-u(y)]^2/|z-y|^{d+\alpha}\,dz=\infty$ for every
$y$. Also $\E_{x_0}u^2(X_{\tau_U})=\infty$, because the distribution
of $X_{\tau_U}$ has density function bounded below by a multiple of
$(1+|y|)^{-d-\alpha}$ in the complement of the neighborhood 
of $\overline{U}$, \kb{ see the discussion of \eqref{eq:rphi}}. Therefore in what
follows we may assume that $\int_\Rd u(y)^2(1+|y|)^{-d-\alpha}\kb{dy}<\infty$.
Since $u^2$ is \kb{$C^2$ on $D$},
$\Delta^{\alpha/2}(u^2)$ is bounded on $\overline{U}$.
By (\ref{eq:rphi}) with $\phi=u^2$, for $x\in \Rd$ we have
\[ 
\E_x u^2(X_{\tau_U})=u^2(x)+\int_U G_U(x,y) \Delta^{\alpha/2}(u^2)(y)\,dy\,.
\] 
For $y\in \overline{U}$, $z\in \Rd$, we have
$u^2(z)-u^2(y)-2u(y)[u(z)-u(y)]=[u(z)-u(y))]^2$. Since
$\Delta^{\alpha/2}u(y)=0$, we have
\begin{align*}
\Delta^{\alpha/2}u^2(y)&=
\Delta^{\alpha/2}u^2(y)-2u(y)\Delta^{\alpha/2}u(y)\\
&=
\lim_{\eta\to 0^+}  \!\!\!\!\!\!\!\!
\int\limits_{\{z\in \Rd:\,|z-y|>\eta\}}
\!\!\!\!\!\!\!\!\!\!\!\!\!\!\A\,
\frac{u^2(z)-u^2(y)-2u(y)[u(z)-u(y)]}{|z-y|^{d+\alpha}}\,dz\\
&=
\int_\Rd\!\A\,
\frac{[u(z)-u(y)]^2}{|z-y|^{d+\alpha}}\,dz\,,
\end{align*}
and (\ref{eq:H2U}) follows.
\qed \end{proof}

We obtain the description of $\HH^2$ aforementioned in Introduction.
\begin{cor}\label{cor:c1}
 If $u$ is $\alpha$-harmonic in $D$, then \eqref{eq:chH2} holds.
\end{cor}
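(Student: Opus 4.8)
The plan is to pass to the limit in the identity of Lemma~\ref{lem:H2U} along an exhaustion of $D$. First I would fix an increasing sequence of open sets $U_n\subset\subset D$ with $\bigcup_n U_n=D$, say $U_n=\{x\in D:\ |x|<n,\ \mathrm{dist}(x,D^c)>1/n\}$. Applying (\ref{eq:H2U}) with $U=U_n$ gives
\begin{equation*}
\E_{x_0}u^2(X_{\tau_{U_n}})=u(x_0)^2+\int_{U_n} G_{U_n}(x_0,y)\,F(y)\,dy\,,
\end{equation*}
where I write $F(y)=\int_\Rd \A\,[u(z)-u(y)]^2|z-y|^{-d-\alpha}\,dz$ for the nonnegative inner integral.

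The key point is that the double integral increases with $U$. Indeed $F(y)\ge 0$ for every $y$, being a square times the positive kernel $\A|z-y|^{-d-\alpha}$ integrated in $z$. Regarding $G_{U_n}(x_0,\cdot)$ as a function on $D$ by setting it equal to zero off $U_n$, the recalled monotonicity $G_{U_n}(x_0,y)\uparrow G_D(x_0,y)$ as $U_n\uparrow D$ shows that $G_{U_n}(x_0,y)F(y)$ increases pointwise in $n$ to $G_D(x_0,y)F(y)$. By the monotone convergence theorem,
\begin{equation*}
\int_{U_n} G_{U_n}(x_0,y)\,F(y)\,dy\ \longrightarrow\ \int_D G_D(x_0,y)\,F(y)\,dy\,,
\end{equation*}
so $\E_{x_0}u^2(X_{\tau_{U_n}})$ increases to the right-hand side of (\ref{eq:chH2}).

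The only remaining point, and the one I would treat with a little care, is to replace the limit along the particular exhaustion $U_n$ by the supremum over all $U\subset\subset D$ appearing in (\ref{eq:chH2}). For this I would invoke the monotonicity of $U\mapsto\E_{x_0}u^2(X_{\tau_U})$ under inclusion. This is immediate from the identity (\ref{eq:H2U}) together with the monotonicity of its right-hand side just established, or alternatively from the submartingale property of $\{u^2(X_{\tau_U})\}$ recorded in the Introduction, obtained by applying the convex function $t\mapsto t^2$ to the martingale $u(X_{\tau_U})$. Since every $U\subset\subset D$ is contained in some $U_n$, this monotonicity forces $\sup_{U}\E_{x_0}u^2(X_{\tau_U})=\lim_n\E_{x_0}u^2(X_{\tau_{U_n}})$, and (\ref{eq:chH2}) follows. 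No deeper difficulty is expected, since the analytic substance is already carried by Lemma~\ref{lem:H2U}.
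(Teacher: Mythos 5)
Your proof is correct and is essentially the paper's own argument: apply Lemma~\ref{lem:H2U}, use the monotonicity $G_U(x_0,y)\uparrow G_D(x_0,y)$ as $U\uparrow D$, and conclude by the monotone convergence theorem. Your additional care in identifying the supremum over all $U\subset\subset D$ with the limit along an exhaustion (via the monotonicity of $U\mapsto\E_{x_0}u^2(X_{\tau_U})$ noted in the Introduction) merely makes explicit a detail the paper leaves implicit.
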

\begin{proof}
\kb{Recall that $G_U(x,y)\uparrow G_D(x,y)$ as $U\uparrow D$. By the monotone convergence theorem we obtain the result, also if $G_D\equiv \infty$ on $D$}.
\qed \end{proof}
We conclude that $\HH^2$ consists of precisely all those functions $\alpha$-harmonic on $D$ for which the quadratic form
on the right hand side of (\ref{eq:chH2}) is finite.

We now consider 
\kb{arbitrary real number} $p>1$. 
We note that $x\mapsto |x|^p$ is convex on $\R$, with the derivative $pa|a|^{p-2}$ at $x=a$.
For $a$, $b\in\C$ we let
\begin{equation}\label{eq:defF}
 F(a,b) = |b|^p-|a|^p - p\overline{a}|a|^{p-2}(b-a)\,.
\end{equation}
We have $F(a,b)=|b|^p$ if $a=0$, and $F(a,b)=(p-1)|a|^p$ if $b=0$.
\kb{If $a, b\in\R$, then $F(a,b)$ is the second-order Taylor remainder of $\RR\ni x\mapsto |x|^p$, and, by convexity, $F(a,b)\geq 0$.}
\begin{example}
{\rm For (even) $p=2,4,\ldots$ and $a, b\in \R$, we have
\[ F(a,b) = b^p-a^p - pa^{p-1}(b-a)
 = (b-a)^2 \sum_{k=0}^{p-2} (k+1)b^{p-2-k} a^k.
\]}
\end{example}
Let
$\varepsilon$, $b$ and $a$ be real numbers. \kb{For $p>1$ we} define
\begin{equation}\label{eq:Feps}
F_\varepsilon(a,b) = 
\re F(a+i\varepsilon, b+i\varepsilon)=
|b+i\varepsilon|^p-|a+i\varepsilon|^p
  - pa|a+i\varepsilon|^{p-2}(b-a)\,.
\end{equation}
\kb{$F_\varepsilon (a,b)$ is the second-order Taylor remainder of $\RR\ni x\mapsto (x^2+\varepsilon^2)^{p/2}$, and, by convexity, $F_\varepsilon(a,b)\geq 0$ (see below).}
Of course, $F_\varepsilon(a,b)\to F(a,b)$ as $\varepsilon\to 0$.
\begin{lem}\label{lem:F}
For every $p>1$, we have
\begin{equation}\label{elem-ineq}
F(a,b)
\;\asymp\; (b-a)^2(|b|\vee |a|)^{p-2},\qquad
a, b\in\R .
\end{equation} 
If $p\in (1,2)$, then 
\begin{equation}
  \label{eq:ub}
\kb{0\le}F_\varepsilon(a,b)\leq \kb{\frac{1}{p-1}}F(a,b)\,,\qquad \varepsilon, a, b\in\R\,.
\end{equation}
\end{lem}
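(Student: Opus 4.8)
The plan is to treat both $F(a,b)$ and $F_\varepsilon(a,b)$ as second order Taylor remainders of one-dimensional convex functions and to exploit homogeneity. Writing $f(t)=|t|^p$, note that $f'(t)=pt|t|^{p-2}$, so $F(a,b)=f(b)-f(a)-f'(a)(b-a)$ is exactly the second order Taylor remainder of $f$ at $a$; the integral form of the remainder gives $F(a,b)=\int_a^b(b-t)f''(t)\,dt$ with $f''(t)=p(p-1)|t|^{p-2}$, which is integrable across $0$ because $p-2>-1$. In particular $F\ge0$, and $F>0$ whenever $a\ne b$ by strict convexity of $|t|^p$ for $p>1$.

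For the comparability (\ref{elem-ineq}) I would argue by scaling and compactness. Both $F(a,b)$ and $(b-a)^2(|b|\vee|a|)^{p-2}$ are homogeneous of degree $p$ under $(a,b)\mapsto(\lambda a,\lambda b)$, $\lambda>0$, and invariant under $(a,b)\mapsto(-a,-b)$, so the quotient
\[
Q(a,b)=\frac{F(a,b)}{(b-a)^2(|b|\vee|a|)^{p-2}}
\]
depends only on the point $(a,b)/\sqrt{a^2+b^2}$ of the unit circle $S$. On $S$ the denominator stays bounded away from $0$ and $\infty$ except where $a=b$, and $Q$ is continuous and strictly positive away from the two diagonal points $a=b=\pm1/\sqrt2$, where numerator and denominator both vanish. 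The main point is to check that $Q$ extends continuously and positively across these points: as $b\to a$ with $a\ne0$, a Taylor expansion gives $F(a,b)\sim\tfrac12 f''(a)(b-a)^2=\tfrac12 p(p-1)|a|^{p-2}(b-a)^2$ while the denominator is $\sim(b-a)^2|a|^{p-2}$, so $Q\to\tfrac12 p(p-1)>0$. Hence $Q$ extends to a continuous, strictly positive function on the compact set $S$, and (\ref{elem-ineq}) follows with $c=\inf_S Q$ and $C=\sup_S Q$. Points with $a=0$ or $b=0$ lie off the diagonal and require no separate treatment, consistent with the explicit values $F(a,0)=(p-1)|a|^p$ and $F(0,b)=|b|^p$ recorded before the statement.

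For (\ref{eq:ub}) the key observation is that $F_\varepsilon$ is itself such a remainder: with $g_\varepsilon(x)=|x+i\varepsilon|^p=(x^2+\varepsilon^2)^{p/2}$ one has $g_\varepsilon'(x)=px(x^2+\varepsilon^2)^{(p-2)/2}$, so $F_\varepsilon(a,b)=g_\varepsilon(b)-g_\varepsilon(a)-g_\varepsilon'(a)(b-a)=\int_a^b(b-t)g_\varepsilon''(t)\,dt$. A direct computation gives $g_\varepsilon''(t)=p(t^2+\varepsilon^2)^{(p-4)/2}[(p-1)t^2+\varepsilon^2]$, and here the restriction $p\in(1,2)$ enters decisively: since $p-1\le1$ we may bound $(p-1)t^2+\varepsilon^2\le t^2+\varepsilon^2$, whence $g_\varepsilon''(t)\le p(t^2+\varepsilon^2)^{(p-2)/2}\le p|t|^{p-2}$ \emph{uniformly in} $\varepsilon$. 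Since $p|t|^{p-2}=\tfrac{1}{p-1}f''(t)$, integrating against $(b-t)$ over $[a,b]$ yields $0\le F_\varepsilon(a,b)\le\tfrac{1}{p-1}F(a,b)$ for all $\varepsilon$, and then (\ref{eq:ub}) follows from the upper bound in (\ref{elem-ineq}) just proved. The only bookkeeping to watch is the orientation of the integral when $a>b$, which is harmless because $g_\varepsilon''\ge0$ and the same sign accounting applies to $f''$. I expect the uniformity in $\varepsilon$, here isolated to the single inequality $g_\varepsilon''(t)\le p|t|^{p-2}$, to be the only real obstacle.
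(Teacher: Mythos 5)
Your proof is correct, and it differs from the paper's in execution even though both rest on the same two ingredients: the homogeneity $F(ka,kb)=|k|^pF(a,b)$ and the integral (Taylor-remainder) representation with $f''(t)=p(p-1)|t|^{p-2}$, respectively $g_\varepsilon''(t)=p(t^2+\varepsilon^2)^{(p-4)/2}[(p-1)t^2+\varepsilon^2]$. For (\ref{elem-ineq}) the paper normalizes to $a=1$ and estimates $p(p-1)\int_1^b|y|^{p-2}(b-y)\,dy$ explicitly (comparable to $(b-1)^2$ near $b=1$, to $|b|^p$ for large $|b|$), which yields concrete constants $c_p$, $C_p$; these are referenced later, since the proof of Theorem~\ref{thm:t1} asserts comparability ``with the same constants as in (\ref{elem-ineq-b1})''. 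Your compactness argument on the unit circle --- continuity and positivity of the quotient $Q$ off the diagonal, plus the Taylor limit $Q\to p(p-1)/2$ at the two diagonal points, which you correctly verify as a genuine two-variable limit --- gives the same conclusion with less computation, but with non-explicit constants; that is the only thing you lose. For (\ref{eq:ub}) your route is genuinely cleaner than the paper's: from $(p-1)t^2+\varepsilon^2\le t^2+\varepsilon^2$ you get $g_\varepsilon''(t)\le p|t|^{p-2}=\tfrac{1}{p-1}f''(t)$ uniformly in $\varepsilon$, hence the pointwise domination $F_\varepsilon(a,b)\le\tfrac{1}{p-1}F(a,b)$, and (\ref{eq:ub}) follows at once from the already-proved upper bound in (\ref{elem-ineq}). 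The paper instead proves the bound only at $a=1$, undoes the scaling via $F_\varepsilon(a,b)=|a|^pf_{\varepsilon/a}(b/a)$ --- where uniformity in $\varepsilon$ is needed precisely because rescaling turns $\varepsilon$ into $\varepsilon/a$ --- and must treat $a=0$ by a separate asymptotic computation; your domination handles all $a$, including $a=0$, in one stroke, and it isolates exactly where $p<2$ is used.
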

\begin{proof}
We denote $K(a,b)=(b-a)^2(|b|\vee |a|)^{p-2}$. For every $k\in \R$, $F(ka,kb)=|k|^p F(a,b)$ and
$K(ka,kb)=|k|^pK(a,b)$. 
If $a=0$, then (\ref{elem-ineq}) becomes equality, hence we may assume that $a\neq 0$, in fact -- that $a=1$. 
Let $f(b)=F(1,b)=|b|^p-1-p(b-1)$. We  will prove that 
\begin{equation}\label{elem-ineq-b1}
 c_p (b-1)^2(|b|\vee 1)^{p-2} 
\leq f(b)\leq
 C_p (b-1)^2(|b|\vee 1)^{p-2}.
\end{equation}
Since $f(1)=f'(1)=0$ and $f''(y)=p(p-1)|y|^{p-2}$ \kb{for $y\neq 0$,} we obtain
\[ 
 f(b)=p(p-1)\int_1^b\int_1^x|y|^{p-2}\,dy\,dx=
p(p-1)\int_1^b|y|^{p-2}(b-y)dy\,.
\] 
The first integral is over a simplex of area $(b-1)^2/2$, and
it is a monotone function of the simplex (as ordered by inclusion). For $b$
close to $1$ the integral is comparable to $(b-1)^2$. For large $|b|$
the (second) integral is comparable to $|b|^p$. This proves (\ref{elem-ineq-b1}), hence (\ref{elem-ineq}).
\kb{We now consider $F_\varepsilon$ for $\varepsilon\neq 0$ and $p>1$.}
Let
$$
f_\varepsilon(b)=F_\varepsilon(1,b)
=(b^2+\varepsilon^2)^{p/2}-(1+\varepsilon^2)^{p/2}-p(1+\varepsilon^2)^{(p-2)/2}(b-1)\,.
$$
We have $f_\varepsilon(1)=f'_\varepsilon(1)=0$ and \kb{
\[
f''_\varepsilon(y)=(y^2+\varepsilon^2)^{(p-4)/2}p[y^2(p-1)+\varepsilon^2]\kb{\ge 0}\,,\qquad y\in \RR\, .
\]
Therefore, 
\[
f_\varepsilon(b)= \int_1^b\int_1^x f''_\varepsilon(y)\,dy\,dx\ge 0\,.
\]
In fact we have
$$f''_\varepsilon(y)
\le
p[1\vee (p-1)](y^2+\varepsilon^2)^{(p-2)/2}\,,\quad y\in \RR.$$ 
We now let $1<p\le 2$. For $y\in \RR$ we obtain  $f''_\varepsilon(y)\le
p|y|^{p-2}$, hence
\begin{align*}
f_\varepsilon(b)&\leq p \int_1^b\int_1^x|y|^{p-2}\,dy\,dx
\kb{=\frac1{p-1} f(b)\,.}
\end{align*}}
If $a\neq 0$, then by \eqref{elem-ineq-b1},
\begin{align*}
F_\varepsilon(a,b)&=|a|^p\left[|b/a+i\varepsilon/a|^p-|1+i\varepsilon/a|^p
  - p|1+i\varepsilon/a|^{p-2}(b/a-1)\right]\\
&=|a|^pF_{\varepsilon/a}\left(1,b/a\right)=|a|^pf_{\varepsilon/a}(b/a)\\
&\kb{\le \frac1{p-1} |a|^pf(b/a)=\frac1{p-1}F(a,b)\,.}
\end{align*}
If $a=0$, then
\[
F_\varepsilon(a,b)=(b^2+\varepsilon^2)^{p/2}-|\varepsilon|^p
\leq |b|^p \kb{=F(a,b)\,,
}
\]
\kb{too, since $(\rho+\eta)^{p/2}-\rho^{p/2}=\int_\rho^{\rho+\eta}\frac{p}{2}(y+\eta)^{p/2-1}dy\le \frac{p}{2}\eta^{p/2}\le \eta^{p/2}$ for $\rho,\eta\ge 0$}. The proof of (\ref{eq:ub}) is complete.
\qed \end{proof}
\kb{To prepare for limiting arguments we make the following observation, which follows from Fatou's lemma and dominated convergence theorem.}
\begin{rem}\label{rem:dct}
\kb{\rm If $0\le f_n\to f$ $\mu$-a.e., $\int f_n d\mu$ is bounded and $f_n\le cf$ for some constant $c$, then $\int f_nd\mu\to \int fd\mu$ as $n\to \infty$. }
\end{rem}

\begin{lem}\label{lem:lF}
If $u$ is $\alpha$-harmonic in $D$, $U\subset\subset D$, and $p>1$, then
  \begin{equation}
    \label{eq:HpU}
    \E_{x_0}|u(X_{\tau_U})|^p=|u(x_0)|^p+\int_U G_U(x_0,y)
 \int_\Rd \!\A\,\frac{F(u(y), u(z))}{|z-y|^{d+\alpha}}\,dz\,dy\,.
  \end{equation}
\end{lem}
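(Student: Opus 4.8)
The plan is to apply the representation formula (\ref{eq:rphi}) to $\phi=|u|^p$, exactly as in the proof of Lemma~\ref{lem:H2U}, and then to identify the resulting $\Delta^{\alpha/2}(|u|^p)(y)$, for $y\in U$, with the inner $z$-integral of $F(u(y),u(z))$. First I would dispose of the degenerate case: if $\int_\Rd |u(y)|^p(1+|y|)^{-d-\alpha}\,dy=\infty$, then both sides of (\ref{eq:HpU}) are infinite. The left side is infinite because the exit distribution of $X_{\tau_U}$ has a density bounded below by a multiple of $(1+|z|)^{-d-\alpha}$ off a neighborhood of $\overline U$ (\cite{MR1438304}); the right side is infinite because, for fixed $y\in U$, $F(u(y),u(z))\asymp |u(z)|^p$ whenever $|u(z)|$ is large compared with $|u(y)|$ by (\ref{elem-ineq}), while $|z-y|^{-d-\alpha}\asymp(1+|z|)^{-d-\alpha}$ for large $|z|$ and $G_U(x_0,y)>0$. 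Hence I may assume $\int_\Rd |u(y)|^p(1+|y|)^{-d-\alpha}\,dy<\infty$, which in particular makes $\E_{x_0}|u(X_{\tau_U})|^p$ finite.

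For $p\ge 2$ the function $x\mapsto |x|^p$ is $C^2$ on $\R$, since its second derivative $p(p-1)|x|^{p-2}$ is continuous, so $\phi=|u|^p$ is $C^2$ in a neighborhood of $\overline U$ and meets the integrability hypothesis of (\ref{eq:rphi}). Applying (\ref{eq:rphi}) I obtain $\E_{x_0}|u(X_{\tau_U})|^p=|u(x_0)|^p+\int_U G_U(x_0,y)\,\Delta^{\alpha/2}(|u|^p)(y)\,dy$. As in Lemma~\ref{lem:H2U}, since $\Delta^{\alpha/2}u(y)=0$ for $y\in U$, I would subtract $p\,u(y)|u(y)|^{p-2}\Delta^{\alpha/2}u(y)=0$ inside the principal value and invoke the algebraic identity $|u(z)|^p-|u(y)|^p-p\,u(y)|u(y)|^{p-2}(u(z)-u(y))=F(u(y),u(z))$ from (\ref{eq:defF}); the $C^2$ regularity makes this remainder $O(|z-y|^2)$ near $z=y$, so the principal value is an absolutely convergent integral and $\Delta^{\alpha/2}(|u|^p)(y)=\int_\Rd\A\,F(u(y),u(z))|z-y|^{-d-\alpha}\,dz$, which gives (\ref{eq:HpU}).

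The case $1<p<2$ is where the real work lies, because $|x|^p$ fails to be $C^2$ at $0$ and $\phi=|u|^p$ need not be twice differentiable at the zeros of $u$. Here I would regularize by $\phi_\varepsilon(x)=(u(x)^2+\varepsilon^2)^{p/2}=|u(x)+i\varepsilon|^p$, which is smooth in a neighborhood of $\overline U$ for each $\varepsilon>0$ and still integrable against $(1+|y|)^{-d-\alpha}$. Applying (\ref{eq:rphi}) to $\phi_\varepsilon$ and subtracting $p\,u(y)|u(y)+i\varepsilon|^{p-2}\Delta^{\alpha/2}u(y)=0$ exactly as above identifies, through the definition (\ref{eq:Feps}), the integrand as $F_\varepsilon(u(y),u(z))$, yielding the $\varepsilon$-level identity $\E_{x_0}\phi_\varepsilon(X_{\tau_U})=\phi_\varepsilon(x_0)+\int_U G_U(x_0,y)\int_\Rd\A\,F_\varepsilon(u(y),u(z))|z-y|^{-d-\alpha}\,dz\,dy$.

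Finally I would let $\varepsilon\to 0$. On the left, $\phi_\varepsilon(X_{\tau_U})\to|u(X_{\tau_U})|^p$ with $\phi_\varepsilon\le c(|u|^p+1)$ for $\varepsilon\le 1$, so dominated convergence (using the finiteness secured above) gives the limit, and $\phi_\varepsilon(x_0)\to|u(x_0)|^p$. The delicate step is the double integral, and this is precisely where the uniform bound (\ref{eq:ub}) of Lemma~\ref{lem:F} is needed. Since $F_\varepsilon(a,b)\to F(a,b)$ with $F_\varepsilon\ge 0$, Fatou's lemma first bounds $\int_U G_U(x_0,y)\int_\Rd\A\,F(u(y),u(z))|z-y|^{-d-\alpha}\,dz\,dy$ by the finite limit of the right side, establishing its finiteness; then (\ref{eq:ub}) combined with (\ref{elem-ineq}) yields $F_\varepsilon\le c\,F$ uniformly in $\varepsilon$, so dominated convergence identifies the limit of the double integral of $F_\varepsilon$ with that of $F$. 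Equating the two limits gives (\ref{eq:HpU}). I expect this Fatou-then-dominated-convergence passage, resting on the uniform comparison (\ref{eq:ub}), to be the main obstacle.
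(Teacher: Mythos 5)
Your proposal is correct and follows essentially the same route as the paper: the degenerate case is handled as in Lemma~\ref{lem:H2U}, the case $p\geq 2$ applies (\ref{eq:rphi}) to $\phi=|u|^p$ directly, and the case $1<p<2$ regularizes with $|u+i\varepsilon|^p$ and passes to the limit. Your unpacking of the limiting step --- Fatou's lemma to secure finiteness of the double integral of $F$, then the domination $F_\varepsilon\leq cF$ obtained by combining (\ref{eq:ub}) with (\ref{elem-ineq}) --- is exactly what the paper's terse citation of Fatou, (\ref{eq:ub}), (\ref{elem-ineq}) and dominated convergence intends.
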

\begin{proof}
We proceed as in Lemma~\ref{lem:H2U}. \kb{In particular, if 
\[
\int_\Rd|u(y)|^p(1+|y|)^{-d-\alpha}dy=\infty,
\]
then $\E_{x_0}|u(X_{\tau_U})|^p=\infty$ and also, by Lemma~\ref{lem:F}, 
\[
\int_{\Rd}\frac{F(u(y), u(z))}{|z-y|^{d+\alpha}}dz=\infty
\] 
for every $y\in \Rd$. Therefore in what follows, we assume that $\int_\Rd
|u(y)|^p(1+|y|)^{-d-\alpha}dy<\infty$, or $\E_{x_0}|u(X_{\tau_U})|^p<\infty$.}
We first consider the case of $p\geq 2$ and apply
(\ref{eq:rphi}) to $\phi=|u|^p\in C^2(D)$.
For $y\in D$ we have $\Delta^{\alpha/2}u(y)=0$, hence
\begin{align*}
&\Delta^{\alpha/2}|u|^p(y)
= \Delta^{\alpha/2}|u|^p(y)-p u(y)|u(y)|^{p-2}\Delta^{\alpha/2}u(y)\\
&=
\lim_{\eta\to 0^+}
\int\limits_{\{z\in \Rd:\,|z-y|>\eta\}}\!\!\!\!\!\!\A\,
\frac{|u(z)|^p-|u(y)|^p-pu(y)|u(y)|^{p-2}[u(z)-u(y)]}{|z-y|^{d+\alpha}}\,dz\\
&=  
\int_\Rd\!\A\,
\frac{F(u(y),u(z))}{|z-y|^{d+\alpha}}\,dz.
\end{align*}
This and (\ref{eq:rphi}) yield (\ref{eq:HpU}) for $p\geq 2$. We now consider $1<p<2$. We note that $|u+i\varepsilon|^p\in C^2(D)$. As in the first part of the proof, 
\begin{align*}
\Delta^{\alpha/2}|u+i\varepsilon|^p(y)
&= \Delta^{\alpha/2}|u+i\varepsilon|^p(y)-p u(y)|u(y)+i\varepsilon|^{p-2}\Delta^{\alpha/2}u(y)\\
&=  
\int_\Rd
\!\A\,\frac{F_\varepsilon(u(y),u(z))}{|z-y|^{d+\alpha}}\,dz,
\end{align*}
hence
\begin{equation}\label{eq:epsto0}
\E_{x_0}|u(X_{\tau_U})+i\varepsilon|^p=|u(x_0)+i\varepsilon|^p+\int_U G_U(x_0,y)
 \int_\Rd \!\A\,\frac{F_\varepsilon(u(y),u(z))}{|z-y|^{d+\alpha}}\,dz\,dy.
\end{equation}
\kb{
By Jensen's inequality,
\[
\E_{x_0}|u(X_{\tau_U})+i\varepsilon|^p \leq \E_{x_0}(|u(X_{\tau_U})|+|\varepsilon|)^p\le 2^{p-1}\E_{x_0}( |u(X_{\tau_U})|^p + |\varepsilon|^p),
\]
which remains bounded as $\varepsilon\to 0$.
By Remark~\ref{rem:dct} and
Lemma~\ref{lem:F} applied to 
the right-hand side of   \eqref{eq:epsto0}
and by the dominated convergence theorem applied to its left-hand side we
obtain \eqref{eq:HpU}.
}
\qed \end{proof}

\begin{proof}[Proof of Theorem~\ref{thm:t1}]
Lemma~\ref{lem:F}, Lemma~\ref{lem:lF} and monotone convergence imply the comparability of $\|u\|_p^p$ and (\ref{eq:3}) with the same constants as in (\ref{elem-ineq-b1}), under the mere assumption that $u$ be $\alpha$-harmonic on $D$. In fact,
  \begin{equation}
    \label{eq:HpD}
    \|u\|_p^p=|u(x_0)|^p+\int_D G_D(x_0,y)
 \int_\Rd \!\A\,\frac{F(u(y), u(z))}{|z-y|^{d+\alpha}}\,dz\,dy\,.
  \end{equation}
\qed \end{proof}
We note that 
in many cases \kb{sharp two-sided estimates 
of $G_D$ are} known. For instance, if $D$ is a bounded \kb{open} $C^{1,1}$ \kb{set}
in $\R^d$ and $d>\alpha$, then
\[
 G_D(x_0,y)\asymp\delta_D(y)^{\alpha/2}|y-x_0|^{\alpha-d}\,,
\]
where $\delta_D(y):={\rm dist}(y, D^c)$, \kb{see \cite{MR1654824, MR1490808, MR2200508}}. 

Recall the definition of $F_\varepsilon$, (\ref{eq:Feps}), and the fact that $F_0 = F$ of (\ref{eq:defF}). Before moving to conditional Hardy spaces we record the following observation.
\begin{lem}\label{lem:contractions}
For every $p>1$ and $a_1$, $a_2$, $b_1$, $b_2$, $\varepsilon\in\R$, we have
\begin{align}
F_\varepsilon(a_1\wedge a_2, b_1\wedge b_2) &\leq F_\varepsilon(a_1,b_1) \vee F_\varepsilon(a_2,b_2)\label{eq:min},\\
F_\varepsilon(a_1\vee a_2, b_1\vee b_2) &\leq F_\varepsilon(a_1,b_1) \vee F_\varepsilon(a_2,b_2)\label{eq:max}.
\end{align}
In particular, $F(a\wedge 1, b\wedge 1) \leq F(a,b)$ and $F(a\vee (-1), b\vee (-1)) \leq F(a,b)$, for all $a,b\in \R$. The latter also extends to $K(a,b)=(b-a)^2(|b|\vee |a|)^{p-2}$.
\end{lem}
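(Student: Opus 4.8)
The plan is to deduce both inequalities, and their consequences, from two monotonicity properties of the map $(a,b)\mapsto F_\varepsilon(a,b)$. Writing $h(t)=pt\,|t+i\varepsilon|^{p-2}=pt(t^2+\varepsilon^2)^{(p-2)/2}$, one checks from (\ref{eq:Feps}) that
\[
\partial_b F_\varepsilon(a,b)=h(b)-h(a),\qquad \partial_a F_\varepsilon(a,b)=-h'(a)(b-a).
\]
Since $h$ is strictly increasing on $\R$ (its derivative $p(t^2+\varepsilon^2)^{(p-4)/2}[(p-1)t^2+\varepsilon^2]$ is positive for $t\neq 0$ when $p>1$), the first identity shows that for fixed $a$ the function $F_\varepsilon(a,\cdot)$ is decreasing on $(-\infty,a]$ and increasing on $[a,\infty)$; since $h'\geq 0$, the second identity shows that for fixed $b$ the function $F_\varepsilon(\cdot,b)$ is decreasing on $(-\infty,b]$ and increasing on $[b,\infty)$. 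In both cases the unique minimum, of value $0$, is attained on the diagonal. These are the only facts the argument uses.

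Next I reduce (\ref{eq:min}) and (\ref{eq:max}) to a short case analysis. Both sides of each inequality are symmetric under swapping $(a_1,b_1)\leftrightarrow(a_2,b_2)$, so I may assume $a_1\le a_2$. If moreover $b_1\le b_2$, then $(a_1\wedge a_2,b_1\wedge b_2)=(a_1,b_1)$ and $(a_1\vee a_2,b_1\vee b_2)=(a_2,b_2)$, so both inequalities are trivial. There remains the opposite ordering $b_2<b_1$, where the left-hand sides are $F_\varepsilon(a_1,b_2)$ and $F_\varepsilon(a_2,b_1)$. For (\ref{eq:min}) I split on $a_1$: if $a_1\le b_2$, then $F_\varepsilon(a_1,\cdot)$ is increasing on $[a_1,\infty)$ and $a_1\le b_2<b_1$ give $F_\varepsilon(a_1,b_2)\le F_\varepsilon(a_1,b_1)$; if $a_1>b_2$, then $F_\varepsilon(\cdot,b_2)$ is increasing on $[b_2,\infty)$ and $b_2<a_1\le a_2$ give $F_\varepsilon(a_1,b_2)\le F_\varepsilon(a_2,b_2)$. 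For (\ref{eq:max}) I split on $b_1$: if $b_1\le a_2$, then $F_\varepsilon(a_2,\cdot)$ is decreasing on $(-\infty,a_2]$ and $b_2<b_1\le a_2$ give $F_\varepsilon(a_2,b_1)\le F_\varepsilon(a_2,b_2)$; if $b_1>a_2$, then $F_\varepsilon(\cdot,b_1)$ is decreasing on $(-\infty,b_1]$ and $a_1\le a_2<b_1$ give $F_\varepsilon(a_2,b_1)\le F_\varepsilon(a_1,b_1)$. In every case the relevant left-hand side is dominated by one of the two terms on the right, proving (\ref{eq:min}) and (\ref{eq:max}).

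The particular cases follow by taking $\varepsilon=0$, so that $F_\varepsilon=F$, and choosing the second pair on the diagonal, where $F$ vanishes. Applying (\ref{eq:min}) with $(a_2,b_2)=(1,1)$ gives $F(a\wedge 1,b\wedge 1)\le F(a,b)\vee F(1,1)=F(a,b)$, since $F(1,1)=0$ and $F\ge 0$ on $\R^2$; applying (\ref{eq:max}) with $(a_2,b_2)=(-1,-1)$ gives $F(a\vee(-1),b\vee(-1))\le F(a,b)\vee F(-1,-1)=F(a,b)$.

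Finally, for the extension to $K(a,b)=(b-a)^2(|b|\vee|a|)^{p-2}$ I note that $K$ is symmetric, $K(a,b)=K(b,a)$, invariant under $(a,b)\mapsto(-a,-b)$, nonnegative, and zero exactly on the diagonal. Hence it suffices to establish for $K$ the very same two monotonicity properties: for fixed $a$, $K(a,\cdot)$ is decreasing on $(-\infty,a]$ and increasing on $[a,\infty)$, and symmetrically in the two variables. Once these hold, the case analysis above applies verbatim and yields the $K$-analogues of (\ref{eq:min}) and (\ref{eq:max}), whence $K(a\wedge 1,b\wedge 1)\le K(a,b)$ and $K(a\vee(-1),b\vee(-1))\le K(a,b)$ exactly as for $F$. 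I expect the monotonicity of $K$ to be the one step that is not purely formal: unlike $F_\varepsilon$, the factor $|b|\vee|a|$ is not smooth, so one must split into the regions where this maximum equals $|a|$ or $|b|$ and where $a,b$ have opposite signs. The clean way is to use the sign symmetry to reduce to $a\ge 0$ and then differentiate on each of the ranges $b\ge a$, $0\le b\le a$, $-a\le b\le 0$, and $b<-a$, checking in each that the resulting one-variable function moves away from its value $0$ at $b=a$; the sign of the derivative is governed by an expression of the form $pc\pm(p-2)a$ that stays positive for $p>1$.
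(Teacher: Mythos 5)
Your proof of (\ref{eq:min}) and (\ref{eq:max}) and of the two particular cases is correct and is essentially the paper's own argument: the same two partial derivatives, the same one-sided monotonicity of $F_\varepsilon$ in each variable with the minimum on the diagonal, and the same case analysis, the only cosmetic differences being that the paper deduces (\ref{eq:max}) from (\ref{eq:min}) via $F_\varepsilon(-a,-b)=F_\varepsilon(a,b)$ and handles $\varepsilon=0$ by passing to the limit, while you argue both cases directly.

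The one genuine divergence is the extension to $K(a,b)=(b-a)^2(|b|\vee|a|)^{p-2}$, and there your instinct is sound. The paper simply declares the truncation inequalities for $K$ obvious, which they are for $p\geq 2$: truncation contracts the increment, $|b\wedge 1|\vee|a\wedge 1|\leq|b|\vee|a|$, and $t\mapsto t^{p-2}$ is nondecreasing, so both factors shrink. For $1<p<2$ that factor-by-factor argument breaks down, since $t^{p-2}$ is then decreasing and the comparison of the max-factors goes the wrong way; e.g.\ for $0\leq a<1<b$ one must compare $(1-a)^2$ with $(b-a)^2b^{p-2}$, which no term-by-term reasoning settles. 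Your route --- establish that $K(a,\cdot)$ decreases on $(-\infty,a]$ and increases on $[a,\infty)$, invoke $K(a,b)=K(b,a)$ and $K(-a,-b)=K(a,b)$, and rerun the case analysis --- treats all $p>1$ uniformly and is the more honest argument in the range $1<p<2$. You leave this monotonicity as a sketch, but the sketch closes exactly as you predict: with $a\geq 0$ fixed, on $\{b\geq a\}$ one gets $\partial_b K=(b-a)b^{p-3}\left[pb-(p-2)a\right]\geq 0$ (for $1<p<2$ write the bracket as $pb+(2-p)a$); on $\{0\leq b\leq a\}$ and $\{-a\leq b\leq 0\}$ one has $K=(b-a)^2a^{p-2}$, decreasing in $b\leq a$; and on $\{b<-a\}$, $\partial_b K=(b-a)(-b)^{p-3}\left[p(-b)-(2-p)a\right]<0$ since $-b>a\geq 0$ gives $p(-b)-(2-p)a>2(p-1)a\geq 0$. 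Writing out those four lines completes your proposal, and it in fact yields slightly more than the paper states, namely the full analogues of (\ref{eq:min}) and (\ref{eq:max}) for $K$.
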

\begin{proof}
Let $\varepsilon\neq 0$. 
We claim that the function $b\mapsto F_\varepsilon(a,b)$ decreases on $(-\infty, a]$
and increases on $[a, \infty)$. To see this, we consider
\begin{equation}\label{eq:pF}
\frac{\partial F_\varepsilon}{\partial b} (a, b) 
 =
 pb(b^2+\varepsilon^2)^{p/2-1} - pa(a^2+\varepsilon^2)^{p/2-1} \,.
\end{equation}
The function $h(x)=px(x^2+\varepsilon^2)^{p/2-1}$ has derivative $h'(x)=p(x^2+\varepsilon^2)^{p/2-2}(x^2(p-1)+\varepsilon^2)>0$.
If follows that the difference in (\ref{eq:pF}) 
is positive if $b>a$ and negative if $b<a$. This proves our claim.

Furthermore the function $a\mapsto F_\varepsilon(a,b)$ decreases on $(-\infty, b]$ and increases on $[b, \infty)$, as follows from calculating the derivative,
\[ 
\frac{\partial F_\varepsilon}{\partial a} (a,b)
 = p(a-b)(a^2+\varepsilon^2)^{p/2-2}(a^2(p-1)+\varepsilon^2) \,.
\] 
We now prove (\ref{eq:min}).
If $b_1\wedge b_2 = b_1$ and $a_1\wedge a_2 = a_1$ (or
$b_1\wedge b_2 = b_2$ and $a_1\wedge a_2 = a_2$), then (\ref{eq:min}) is trivial.
If $b_1\wedge b_2 = b_1$ and  $a_1\wedge a_2 =a_2$, then the monotonicity of $F_\varepsilon$ yields
\begin{align*}
F_\varepsilon(a_2, b_1) &\leq F_\varepsilon(a_1, b_1), \quad \textrm{if $b_1<a_2$},\\
F_\varepsilon(a_2, b_1) &\leq F_\varepsilon(a_2, b_2), \quad \textrm{if $b_1\geq a_2$}.
\end{align*}
The case $b_1\wedge b_2 = b_2$ and  $a_1\wedge a_2 =a_1$ obtains by renaming the arguments. This proves inequality (\ref{eq:min}).
(\ref{eq:max}) follows from (\ref{eq:min}) and the identity
\[
  F_\varepsilon(-a,-b) =F_\varepsilon(a,b). 
\]
The case $\varepsilon=0$ obtains by passing to the limit.
When $a=b$, we have $F(a,b)=0$, which yields the second  last statement of the lemma. For $K$ we obviously have 
 $(b\wedge 1-a \wedge 1)^2(|b\wedge 1|\vee |a\wedge 1|)^{p-2}\leq (b-a)^2(|b|\vee |a|)^{p-2}$ and
 $(b\vee (-1)-a \vee (-1) )^2(|b\vee (-1)|\vee |a\vee (-1)|)^{p-2}\leq (b-a)^2(|b|\vee |a|)^{p-2}$.
\qed \end{proof}

In passing we note that if the right-hand side of (\ref{eq:HpU}) is finite for  $u$, then it is also finite (in fact--smaller) for  $u\wedge 1$ and $u\vee (-1)$. The latter functions have smaller values and increments than $u$, a property defining {\it normal contractions} for Dirichlet forms (\cite{MR1303354}).

\section{Characterization of $\HH_h^{p}$}\label{sec:hhp}
The fractional Laplacian is a nonlocal operator and the corresponding stochastic process $X$ has jumps. In consequence the definitions of $\alpha$-harmonicity  (\ref{eq:eq:dharm}) and (\ref{eq:dh})
involve the values of the function on the whole of $D^c$ (\cite{MR2365478}). This is somewhat unusual compared with the classical theory of the Laplacian and the Brownian motion, and efforts were made by various authors to ascribe genuine boundary conditions to such processes and functions (\cite{MR2006232, MR1980119, MR2214908, MR2214140, MR2114264, 2013-TL-POTA}, see also \cite{MR2198018,MR2088043}).
One possibility is to study the boundary behavior of \kb{$\alpha$-}harmonic functions after an appropriate normalization.  
\kb{We shall use Doob's conditioning 
to normalize.
The procedure was proposed for classical harmonic functions in \cite{MR0109961}, and 
\cite[Chapter~11]{MR2152573} treats a general case.}
We shall focus on $\alpha$-harmonic functions vanishing on $D^c$, so that $D^c$ may be ignored.
Namely, let $h$ be \kb{$\alpha$-}harmonic 
and positive on $D$, and let $h$ vanish on $D^c$. 
\kb{Such functions are called singular $\alpha$-harmonic on $D$ (\cite{MR2365478}).}
We consider the transition semigroup
\begin{equation}\label{eq:cs}
  P^h_t f(x)=\frac{1}{h(x)}\int p_D(t,x,y)f(y)h(y)dy\,,
\end{equation}
where $p_D$, defined in Section~\ref{sec:intro},
is the time-homogeneous  transition density of $X$ {\it killed} on leaving
$D$ (\cite{MR1671973}). The semigroup property of $P_t^h$ follows directly from the Chapman-Kolmogorov  equations for $p_D$ \kb{(cf. \cite[Section~2.2]{MR1329992} or \cite[Section~3]{MR3050510})},
\[
\int_\Rd p_D(s,x,y)p_D(t,y,z)dy=p_D(s+t,x,z)\,.
\]
By $\alpha$-harmonicity and the optional stopping theorem, 
$\E_x h(X_{\tau_U\wedge t})=h(x)$, if $x\in U\subset\subset D$. Letting $U\uparrow D$, by Fatou's lemma we obtain
$\int p_D(t,x,y)h(y)dy=\E_x \{t<\tau_D:\, h(X_t)\}\leq h(x)$, i.e. $P^h_t$ is subprobabilistic.

The conditional process is defined as the Markov process with the transition semigroup $P^h$, and it will be denoted by the same symbol $X$. We let $\E_x^h$ be the corresponding expectation for $X$ starting at $x\in D$, 
\[
\E^h_xf(X_t)=\frac{1}{h(x)}\E_x [t<\tau_D;\,f(X_t)h(X_t)]\,,
\]
see also \cite{MR1671973}. 
A Borel function $r:D\to\R$ is $h$--harmonic
(on $D$) \kb{if} 
for every open $U\subset\subset D$ we have
\[ 
 r(x) = \E_x^h r(X_{\tau_U})=\frac{1}{h(x)}\E_x[X_{\tau_U}\in D; r(X_{\tau_U})h(X_{\tau_U})]\,,\quad x\in U.
\] 
Here we assume that the expectation is
absolutely convergent, in particular--finite.
It is evident that $r$ is $h$-harmonic if and only if $r=u/h$ on $D$, where $u$ is 
$\alpha$-harmonic on $D$ \kb{and vanishes on $D^c$}. 
\kb{Below we call such functions $u$ singular $\alpha$-harmonic on $D$, too, without requiring nonnegativity.}
We are interested in $L^p$ integrability of $u/h$, 
\kb{which} amounts to weighted $L^p$ integrability of $u$. The following definition is adapted from \cite{MR2606956}.
\begin{defi}\label{def:cHn}
For $0<p < \infty$ we define $\HH^p_h=\HH^p_h(D,\alpha)$
as the class of all the functions { $u:\,\Rd\to \R$, singular $\alpha$-harmonic} on $D$ and such that
\[
\| u \|^p_{\HH^p_h}:= 
\sup_{U\subset\subset D} \E^h_{x_0} \left| \frac{u(X_{\tau_U})}{h(X_{\tau_U})}
\right|^p 
=
\frac{1}{h(x_0)}\sup_{U\subset\subset D} \E_{x_0}\frac{|u(X_{\tau_U})|^p}{h(X_{\tau_U})^{p-1}}
< \infty\,,
\]  
where $\E_{x_0}|u(X_{\tau_U})|^p/h(X_{\tau_U})^{p-1}$ means $\E_{x_0}\big[X_{\tau_U}\in D; |u(X_{\tau_U})|^p/h(X_{\tau_U})^{p-1}\big]$.
\end{defi}
\noindent

By Harnack inequality, $\HH^p_h$
does not depend on the choice of $x_0\in D$.
\kb{In what follows we adopt the convention that $u(z)/h(z)=0$ if $u$ is singular $\alpha$-harmonic on $D$
and $z\in D^c$.}
\begin{rem}\label{rem:cHs}
{\rm
Note that the elements of this $\HH^p_h$ are $\alpha$-harmonic, rather than $h$-harmonic.
In view of Definition~\ref{def:cHn}, the {genuine conditional} Hardy space of $\Delta^{\alpha/2}$ \kb{and $h$} is 
$\{{u}/{h}:\,u\in \HH_h^p\}$, with the norm 
$\|u/h\|=\|u\|_{\HH^p_h}$. $\HH^p_h$ may be considered a {weighted Hardy space} of $\Delta^{\alpha/2}$, but it is convenient to call it {conditional Hardy space}, too. 
Below we focus on 
$\|u\|_{\HH^p_h}$, which yields description of both spaces.
}
\end{rem}

\begin{lem}  
  If $u$ is singular $\alpha$-harmonic on $D$ and $U\subset\subset D$, then
\begin{equation}\label{eq:H2Uh}
 \E_{x_0}\frac{u^2(X_{\tau_U})}{h(X_{\tau_U})}\!=\!
\frac{u(x_0)^2}{h(x_0)}
+\int\limits_U G_U(x_0,y)
\int\limits_\Rd\!\A\,\left[\frac{u(z)}{h(z)}-\frac{u(y)}{h(y)}\right]^2
\frac{h(z)\,dz\,dy}
{|z-y|^{d+\alpha}}\,.
 \end{equation}
\end{lem}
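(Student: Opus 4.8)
The plan is to mimic the proof of Lemma~\ref{lem:H2U}, but with the function $u^2$ replaced by $\psi:=u^2/h$, set equal to $0$ on $D^c$ (consistently with $u=0=h$ there), and to read off the conditional expectation from the representation formula (\ref{eq:rphi}). Since $u$ is singular $\alpha$-harmonic and $h$ is positive on $D$, the ratio $r:=u/h$ is smooth on $D$ and $\psi=hr^2$ is $C^2$ in a neighbourhood of $\overline U\subset\subset D$; thus (\ref{eq:rphi}) applies to $\phi=\psi$ once we know $\int_\Rd|\psi(y)|(1+|y|)^{-d-\alpha}\,dy<\infty$. As in Lemma~\ref{lem:H2U}, I would first dispose of the opposite case: if this integral is infinite, then both sides of (\ref{eq:H2Uh}) equal $+\infty$ -- the left-hand side by the lower bound $c(1+|y|)^{-d-\alpha}$ for the exit density of $X_{\tau_U}$ on $D\setminus\overline U$ (\cite{MR1438304}), and the right-hand side by the dominance of the $u^2(z)/h(z)$ contribution in $[r(z)-r(y)]^2h(z)$ -- so the identity holds trivially. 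Hence I may assume finiteness from here on.

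The heart of the matter is the computation of $\Delta^{\alpha/2}\psi$ on $U$, a carr\'e-du-champ identity for the $h$-transformed operator (\ref{eq:hL}). It rests on the pointwise algebraic identity, valid for every $y\in D$ and every $z\in\Rd$,
\[
\frac{u^2(z)}{h(z)}-\frac{u^2(y)}{h(y)}
-2\frac{u(y)}{h(y)}\,[u(z)-u(y)]
+\frac{u^2(y)}{h^2(y)}\,[h(z)-h(y)]
= h(z)\Bigl[\frac{u(z)}{h(z)}-\frac{u(y)}{h(y)}\Bigr]^2,
\]
which I would verify by expanding the square (for $z\in D^c$ both sides are $0$). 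Integrating against $\A|z-y|^{-d-\alpha}\,dz$ over $|z-y|>\eta$ and letting $\eta\to0^+$, the two linear correction terms converge to $2r(y)\Delta^{\alpha/2}u(y)$ and $-r(y)^2\Delta^{\alpha/2}h(y)$, \emph{both of which vanish} because $u$ and $h$ are $\alpha$-harmonic on $D$; the remaining term is already absolutely convergent near $z=y$, where $[r(z)-r(y)]^2=O(|z-y|^2)$ and $\alpha<2$. This yields
\[
\Delta^{\alpha/2}\psi(y)=\int_\Rd\!\A\;h(z)\Bigl[\frac{u(z)}{h(z)}-\frac{u(y)}{h(y)}\Bigr]^2\frac{dz}{|z-y|^{d+\alpha}},\qquad y\in U.
\]

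Finally I would substitute this into (\ref{eq:rphi}) with $\phi=\psi$. On the left, $\E_{x_0}\psi(X_{\tau_U})$ equals $\E_{x_0}[X_{\tau_U}\in D;\,u^2(X_{\tau_U})/h(X_{\tau_U})]$ since $\psi\equiv0$ on $D^c$; on the right, $\psi(x_0)=u^2(x_0)/h(x_0)$ and the Green potential of $\Delta^{\alpha/2}\psi$ is exactly the double integral in (\ref{eq:H2Uh}). This gives the claim. The step I expect to be most delicate is not the algebra but the justification of splitting the principal-value limit defining $\Delta^{\alpha/2}\psi$ into three separately convergent integrals; this is precisely where the finiteness secured in the first paragraph enters, guaranteeing that $\int[u(z)-u(y)]\A|z-y|^{-d-\alpha}\,dz$ and the analogous integral for $h$ converge and may be recombined. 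Equivalently, one may phrase everything through the conditional process, whose Green function is $\tfrac{h(y)}{h(x_0)}G_U(x_0,y)$ and whose jump kernel is $\A\tfrac{h(z)}{h(y)}|z-y|^{-d-\alpha}$, so that the same cancellation gives $\Delta^{\alpha/2}_h(r^2)(y)=\int_\Rd[r(z)-r(y)]^2\A\tfrac{h(z)}{h(y)}|z-y|^{-d-\alpha}\,dz$, reproducing the display above after multiplication by $h$.
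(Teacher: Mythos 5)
Your proposal is correct and takes essentially the same route as the paper: the identical algebraic identity (\ref{eq:rqf}), the same cancellation of the two linear correction terms via $\alpha$-harmonicity of $u$ and $h$, and the same substitution of $\phi=u^2/h$ into the representation formula (\ref{eq:rphi}). The only difference is presentational --- you spell out the infinite-case dichotomy and the justification for splitting the principal value into three convergent pieces, which the paper handles implicitly by proceeding ``as in Lemma~\ref{lem:H2U}.''
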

\begin{proof}
\kb{As in Lemma~\ref{lem:H2U} we assume that $\E_{x_0}u^2(X_{\tau_U})/h(X_{\tau_U})<\infty$,
equivalently $\int_D u^2(y)h(y)^{-1}(1+|y|)^{-d-\alpha}dy<\infty$, else both sides of \eqref{eq:H2Uh} are infinite. We also note that $u^2/h$ is  $C^2$ on $D$.}
Let $y\in D$. For arbitrary $z\in \Rd$ we have
\begin{align}
&\nonumber
\frac{u^2(z)}{h(z)}-  
\frac{u^2(y)}{h(y)}- 
2\frac{u(y)}{h(y)}(u(z)-u(y))+
\frac{u^2(y)}{h^2(y)}(h(z)-h(y))\\
&=\left[\frac{u(z)}{h(z)}-\frac{u(y)}{h(y)}\right]^2h(z)\,.\label{eq:rqf}
\end{align}
By (\ref{eq:rqf}) and $\alpha$-harmonicity,
\begin{align}
\nonumber
\Delta^{\alpha/2}\left(\frac{u^2}{h}\right)(y)&=
\Delta^{\alpha/2}\left(\frac{u^2}{h}\right)(y)
-2\frac{u(y)}{h(y)}\Delta^{\alpha/2}u(y)+
\frac{u^2(y)}{h^2(y)}\Delta^{\alpha/2}h(y)\\
&=
\int_\Rd
\!\A\,\left[\frac{u(z)}{h(z)}-\frac{u(y)}{h(y)}\right]^2|z-y|^{-d-\alpha}h(z)\,dz\,.\label{eq:lu2h}
\end{align}
Noteworthy, the integrand is nonnegative.
Following (\ref{eq:rphi}),
for $u^2/h$ we get
\begin{align*}
\E_{x_0}\frac{u^2(X_{\tau_U})}{h(X_{\tau_U})}&=
\frac{u(x_0)^2}{h(x_0)}
+\int_U G_U(x_0,y)\Delta^{\alpha/2}\left(\frac{u^2}{h}\right)(y)\,\kb{dy}.
\end{align*}
By using (\ref{eq:lu2h}) we obtain (\ref{eq:H2Uh}).
\qed \end{proof}
We can interpret (\ref{eq:H2Uh}) in terms of 
$h$-conditioning and $r=u/h$ as follows,
\[ 
\E^h_{x_0}r(X_{\tau_U})^2=
r(x_0)^2
+\int_U \!\frac{G_U(x_0,y)}{h(x_0)h(y)}
\int_\Rd\!\A\,\frac{\left[r(z)-r(y)\right]^2}{|z-y|^{d+\alpha}}\,\frac{h(z)}{h(y)}
\,dz\, h^2(y)dy\,.
\] 
This is an analogue of (\ref{eq:chH2}), and also indicates the general situation.
For $p>1$ we consider the expressions of the form
\begin{equation*}
F\Big(\frac{a}{s}, \frac{b}{t}\Big)\,,\quad a,b\in\C, \,s,t>0\,, 
\end{equation*}
see (\ref{eq:defF}). 
By Lemma~\ref{lem:F} we have 
\begin{equation}\label{eq:asF}
 \kb{0\le}{F}(\frac{a}{s},\frac{b}{t}) \asymp \Big(\frac{b}{t}-\frac{a}{s}\Big)^2
     \Big(\frac{|b|}{t} \vee \frac{|a|}{s}\Big)^{p-2}\,,\qquad a,b\in\R ,\,s,t>0\,,
\end{equation}   
and the comparisons on the right of (\ref{eq:asF}) hold with the constants $c_p$ and $C_p$ of (\ref{elem-ineq-b1}). 
If $1<p<2$, then we \kb{also consider} 
\begin{equation*} 
F_\varepsilon\Big(\frac{a}{s}, \frac{b}{t}\Big)\,,\qquad \kb{\varepsilon, a, b\in\R,} \;s,t>0\,,
\end{equation*}
where $F_\varepsilon$ is defined in (\ref{eq:Feps}).
 By Lemma~\ref{lem:F} we have 
\begin{equation}\label{eq:Feab}
\kb{0\le}{F_\varepsilon}(\frac{a}{s},\frac{b}{t}) \leq 
\kb{\frac{1}{p-1}F(\frac{a}{s},\frac{b}{t})}\,,
\qquad a,b\in\R\,,\,s,t>0\, .
\end{equation}
\begin{lem}\label{lem:Fof4var}
For $p>1$, $a, b\in\C$ and $s, t>0$, we have
\begin{equation}\label{eq:F4}
 {F}\big(\frac{a}{s},\frac{b}{t}\big)= \frac{|b|^p}{t^{p}}-\frac{|a|^p}{ts^{p-1}}
     - \frac{p|a|^{p-2} \overline{a}(b-a)}{ts^{p-1}}
     + \frac{(p-1)|a|^p(t-s)}{ts^p}\,.
\end{equation}
\end{lem}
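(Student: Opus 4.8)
The plan is to derive (\ref{eq:F4}) by direct substitution into the definition (\ref{eq:defF}) of $F$, followed by elementary algebra; no analytic input is needed. First I would substitute $a/s$ for the first argument and $b/t$ for the second in $F(a,b)=|b|^p-|a|^p-p\overline{a}|a|^{p-2}(b-a)$. Since $s,t>0$, the moduli and powers factor cleanly, giving $|b/t|^p=|b|^p/t^p$, $\overline{a/s}=\overline{a}/s$, and $|a/s|^{p-2}=|a|^{p-2}/s^{p-2}$, so that
\[
F\Big(\frac{a}{s},\frac{b}{t}\Big)=\frac{|b|^p}{t^p}-\frac{|a|^p}{s^p}-\frac{p\,\overline{a}\,|a|^{p-2}}{s^{p-1}}\Big(\frac{b}{t}-\frac{a}{s}\Big).
\]
This is the only use of the definition; everything afterwards is bookkeeping.

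The second step is to expand the last product and invoke the single algebraic identity that drives the whole computation, namely $\overline{a}\,|a|^{p-2}\,a=|a|^p$ (a consequence of $\overline{a}a=|a|^2$). This turns the $a/s$ piece of the bracket into $p|a|^p/s^p$, which combines with the standing $-|a|^p/s^p$ to produce the ``diagonal'' contribution $(p-1)|a|^p/s^p$; what remains is the genuine cross term $-p\overline{a}|a|^{p-2}b/(ts^{p-1})$. At this point the left-hand side has been reduced to the canonical form $\tfrac{|b|^p}{t^p}-\tfrac{p\overline{a}|a|^{p-2}b}{ts^{p-1}}+\tfrac{(p-1)|a|^p}{s^p}$.

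The third step is to confirm that the claimed right-hand side of (\ref{eq:F4}) reduces to this same canonical form. I would expand the factor $(b-a)$ in its third summand and the factor $(t-s)$ in its fourth summand, again using $\overline{a}|a|^{p-2}a=|a|^p$, and then collect the three resulting multiples of $|a|^p/(ts^{p-1})$. The point to verify is that their coefficients, coming respectively from the second, third and fourth summands, sum to $-1+p-(p-1)=0$, so that all terms with denominator $ts^{p-1}$ cancel and exactly the canonical form above survives.

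Since there is no analytic content, there is no serious obstacle here; the only thing to watch is the bookkeeping, namely keeping the four denominators $t^p$, $s^p$, $ts^{p-1}$, $ts^p$ straight and ensuring the $|a|^p/(ts^{p-1})$ terms cancel with the correct signs. The fact that $a,b\in\C$ rather than $\R$ causes no difficulty: conjugation and modulus need only be manipulated formally, and the identity $\overline{a}|a|^{p-2}a=|a|^p$ absorbs every place where the complex structure would otherwise intervene.
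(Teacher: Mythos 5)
Your verification is correct, and it is exactly what the paper intends: the paper's proof consists of the single sentence ``The equality is straightforward to verify, and is left to the reader,'' so a direct substitution into (\ref{eq:defF}) followed by the cancellation $\overline{a}|a|^{p-2}a=|a|^p$ and the coefficient check $-1+p-(p-1)=0$ is precisely the intended argument. Nothing further is needed.
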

\begin{proof}
\kb{By the definition of $F$,
\[
{F}\big(\frac{a}{s},\frac{b}{t}\big)=\frac{|b|^p}{t^{p}}-\frac{|a|^p}{s^{p}}-\frac{p|a|^{p-2}\overline{a}b}{ts^{p-1}}+\frac{p|a|^p}{s^{p}}.
\]
We get the same quantity expanding the right-hand side of (\ref{eq:F4}):
\[
\frac{|b|^p}{t^{p}}-\frac{|a|^p}{ts^{p-1}}-\frac{p|a|^{p-2}\overline{a}b}{ts^{p-1}}+\frac{p|a|^p}{ts^{p-1}}+\frac{p|a|^p}{s^{p}}-\frac{|a|^p}{s^{p}}
-\frac{p|a|^p}{ts^{p-1}}+\frac{|a|^p}{ts^{p-1}}
\]
\[
=\frac{|b|^p}{t^{p}}-\frac{|a|^p}{s^{p}}-\frac{p|a|^{p-2}\overline{a}b}{ts^{p-1}}+\frac{p|a|^p}{s^{p}}.
\]
}
\qed \end{proof}
The homogeneity seen on the left-hand side of (\ref{eq:F4})  is an interesting feature for the right-hand side of (\ref{eq:F4}). We also like to note that \kb{for real arguments} $t{F}(a/s,b/t)$ is the 
\kb{second-}order Taylor \kb{remainder for} 
$(a,s)\mapsto |a|^p /s^{p-1}$ at $(b,t)$ and, of course, ${ F}_\varepsilon(a/s,b/t)\to F(a/s,b/t)$ as $\varepsilon\to 0$.
\kb{
\begin{cor}\label{cor:Fof4var}
For $p>1$, $a, b\in\R$, and $s, t, \varepsilon>0$, we have
\begin{align*}
 {F}_\varepsilon\big(\frac{a}{s},\frac{b}{t}\big)&= \frac{|b+i\varepsilon t|^p}{t^{p}}-\frac{|a+i\varepsilon s|^p}{ts^{p-1}}
     - \frac{p|a+i\varepsilon s|^{p-2} a(b-a)}{ts^{p-1}}\\
&  \kb{-}\frac{p|a+i\varepsilon s|^{p-2} \varepsilon^2 s(t-s)}{ts^{p-1}}
     + \frac{(p-1)|a+i\varepsilon s|^p(t-s)}{ts^p}\,.
\end{align*}
\end{cor}
\begin{proof} The result follows from \eqref{eq:F4} because by \eqref{eq:Feps} we have
\begin{align*}
{F}_\varepsilon\big(\frac{a}{s},\frac{b}{t}\big)&=\re {F}\big(\frac{a+i\varepsilon s}{s},\frac{b+i \varepsilon t}{t}\big)\,.
\end{align*}
\qed
\end{proof}
}

\begin{lem} \label{lem:rhp}
If $u$ is singular $\alpha$-harmonic on $D$, $U\subset\subset D$  and $p>1$, then
\[
 \E_{x_0}\frac{|u(X_{\tau_U})|^p}{h(X_{\tau_U})^{p-1}}=
\frac{|u(x_0)|^p}{h(x_0)^{p-1}}
+\int_U G_U(x_0,y)
\int_\Rd
{F}\left(\frac{u(y)}{h(y)}, \frac{u(z)}{h(z)}\right)
\,\A\,\frac{h(z)dz\,dy}
{|z-y|^{d+\alpha}}\,.
\]
\end{lem}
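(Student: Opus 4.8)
The plan is to mimic the computation that led to (\ref{eq:H2Uh}) and the proof of Lemma~\ref{lem:lF}, applying the representation formula (\ref{eq:rphi}) to $\phi=|u|^p/h^{p-1}$ (set equal to $0$ on $D^c$, where $u=h=0$). Since $h$ is $\alpha$-harmonic it is positive and $C^2$ on $D$, so $\phi$ is $C^2$ in a neighborhood of $\overline U\subset\subset D$ when $p\geq 2$; as in Lemma~\ref{lem:H2U} we may assume the relevant integrals converge, since otherwise both sides are infinite. Everything hinges on computing $\Delta^{\alpha/2}\phi(y)$ for $y\in U$.

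For $p\geq 2$ I would use the algebraic identity of Lemma~\ref{lem:Fof4var}. With $a=u(y)$, $s=h(y)$, $b=u(z)$, $t=h(z)$ (all real, and $s,t>0$ for $y,z\in D$), formula (\ref{eq:F4}) multiplied by $h(z)$ gives
\[
 h(z)\,F\Big(\frac{u(y)}{h(y)},\frac{u(z)}{h(z)}\Big)
 =\Big[\frac{|u(z)|^p}{h(z)^{p-1}}-\frac{|u(y)|^p}{h(y)^{p-1}}\Big]
 -\frac{p\,u(y)|u(y)|^{p-2}}{h(y)^{p-1}}\big(u(z)-u(y)\big)
 +\frac{(p-1)|u(y)|^p}{h(y)^p}\big(h(z)-h(y)\big),
\]
which is exactly the first order Taylor remainder of $(x,w)\mapsto|x|^p/w^{p-1}$ at $(u(y),h(y))$, as noted after Lemma~\ref{lem:Fof4var}. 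Following the computation preceding (\ref{eq:H2Uh}), I write
\[
 \Delta^{\alpha/2}\phi(y)=\Delta^{\alpha/2}\phi(y)
 -\frac{p\,u(y)|u(y)|^{p-2}}{h(y)^{p-1}}\Delta^{\alpha/2}u(y)
 +\frac{(p-1)|u(y)|^p}{h(y)^p}\Delta^{\alpha/2}h(y),
\]
where the last two terms vanish because $u$ and $h$ are $\alpha$-harmonic on $D$. Collecting the three principal-value integrals of (\ref{eq:laplacian}) into one, the integrand becomes precisely $\A\,h(z)F(u(y)/h(y),u(z)/h(z))|z-y|^{-d-\alpha}$, which is nonnegative, so
\[
 \Delta^{\alpha/2}\phi(y)=\int_\Rd \A\,F\Big(\tfrac{u(y)}{h(y)},\tfrac{u(z)}{h(z)}\Big)\frac{h(z)}{|z-y|^{d+\alpha}}\,dz.
\]
Substituting into (\ref{eq:rphi}) with $x=x_0$ yields the assertion for $p\geq 2$.

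For $1<p<2$ the map $x\mapsto|x|^p$ fails to be $C^2$ at the origin, so as in Lemma~\ref{lem:lF} I would regularize, replacing $\phi$ by $\phi_\varepsilon=|u+i\varepsilon h|^p/h^{p-1}=h\,|u/h+i\varepsilon|^p$, which is $C^2$ near $\overline U$. The same homogeneity that underlies Lemma~\ref{lem:Fof4var} shows that the first order Taylor remainder of the smooth map $(x,w)\mapsto|x+i\varepsilon w|^p/w^{p-1}$ at $(u(y),h(y))$, evaluated at $(u(z),h(z))$, equals $h(z)\,F_\varepsilon(u(y)/h(y),u(z)/h(z))$ with $F_\varepsilon$ as in (\ref{eq:Feps}). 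Repeating the argument of the previous paragraph with $F_\varepsilon$ in place of $F$, and again using $\Delta^{\alpha/2}u(y)=\Delta^{\alpha/2}h(y)=0$, I obtain
\[
 \E_{x_0}\frac{|u(X_{\tau_U})+i\varepsilon h(X_{\tau_U})|^p}{h(X_{\tau_U})^{p-1}}
 =\frac{|u(x_0)+i\varepsilon h(x_0)|^p}{h(x_0)^{p-1}}
 +\int_U G_U(x_0,y)\int_\Rd \A\,F_\varepsilon\Big(\tfrac{u(y)}{h(y)},\tfrac{u(z)}{h(z)}\Big)\frac{h(z)}{|z-y|^{d+\alpha}}\,dz\,dy.
\]
Letting $\varepsilon\to0$, with $F_\varepsilon\to F$ pointwise, the uniform bound (\ref{eq:Feab}), the comparison (\ref{eq:asF}), Fatou's lemma and dominated convergence give the claim, exactly as at the end of the proof of Lemma~\ref{lem:lF}.

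I expect the main obstacle to be the range $1<p<2$. First one must settle on the correct regularization: the factorization $\phi_\varepsilon(y)=h(y)\,|u(y)/h(y)+i\varepsilon|^p$ is what makes the scaling collapse the Taylor remainder to $h(z)F_\varepsilon$, and verifying this $\varepsilon$-analogue of (\ref{eq:F4}) is the one genuinely new computation. Second, the passage $\varepsilon\to0$ requires a majorant for the inner $z$-integral that is integrable uniformly in $\varepsilon$; here the nonnegativity of $F$ together with the two-sided control (\ref{eq:asF}) and (\ref{eq:Feab}) is exactly what lets Fatou and dominated convergence combine into an equality. A minor technical point, handled as in Lemma~\ref{lem:H2U}, is the reduction to convergent integrals and the bookkeeping on $\{X_{\tau_U}\in D^c\}$, where $u=h=0$ and both the integrand and $\phi$ are read as $0$.
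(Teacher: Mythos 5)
Your proposal is correct and follows essentially the same route as the paper: the case $p\geq 2$ applies (\ref{eq:rphi}) to $|u|^p/h^{p-1}$ and collapses the principal-value integral to $h(z)F(u(y)/h(y),u(z)/h(z))$ via the $\alpha$-harmonicity of $u$ and $h$ together with Lemma~\ref{lem:Fof4var}, while the case $1<p<2$ uses exactly the paper's regularization $u+i\varepsilon h$ (equivalently $\phi_\varepsilon=h\,|u/h+i\varepsilon|^p$) followed by Fatou's lemma and dominated convergence with the uniform bound (\ref{eq:Feab}). Your explicit verification that the Taylor remainder of $(x,w)\mapsto|x+i\varepsilon w|^p/w^{p-1}$ equals $h(z)F_\varepsilon$ is a correct and slightly more detailed rendering of what the paper leaves implicit in (\ref{eq:lu2hpg2e}).
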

\begin{proof}
\kb{As in Lemma~\ref{lem:lF} we assume that $\E_{x_0}|u(X_{\tau_U})|^p/h(X_{\tau_U})^{p-1}<\infty$,
equivalently $\int_D |u(y)|^p h(y)^{1-p}(1+|y|)^{-d-\alpha}dy<\infty$, else both sides of the equality in the statement are infinite.}
If $p\geq 2$, then $|u|^p/h^{p-1}\in C^2(D)$. 
By (\ref{eq:rphi}),
\[ 
\E_{x_0}\frac{|u(X_{\tau_U})|^p}{h(X_{\tau_U})^{p-1}}=
\frac{|u(x_0)|^p}{h(x_0)^{p-1}}
+\int_U G_U(x_0,y)\Delta^{\alpha/2}\left(\frac{|u|^p}{h^{p-1}}\right)(y)\,dy.
\] 
By 
 $\alpha$-harmonicity of $h$ and $u$,
\begin{align*}
\nonumber
&\Delta^{\alpha/2}\left(\frac{|u|^p}{h^{p-1}}\right)(y)=
\Delta^{\alpha/2}\left(\frac{|u|^p}{h^{p-1}}\right)(y)
-\frac{p|u(y)|^{p-2} u(y)}{h(y)^{p-1}}\Delta^{\alpha/2}u(y)\\\nonumber
&\quad\quad\quad\quad\quad\quad\quad\quad
 +
\frac{(p-1)|u(y)|^p}{h(y)^p}\Delta^{\alpha/2}h(y)\\
\nonumber
&=
\lim\limits_{\eta\to 0^+} \!\!\!\!\!\!\!
\int\limits_{\{z\in \Rd:\, |z-y|>\eta\}}\!\!\!\!\!\!\!\!\!
\bigg[
\frac{|u(z)|^p}{h(z)^{p-1}}-  
\frac{|u(y)|^p}{h(y)^{p-1}}
 - \frac{p|u(y)|^{p-2} u(y)}{h(y)^{p-1}}(u(z)-u(y))\\\nonumber
&
+\frac{(p-1)|u(y)|^p}{h(y)^p}(h(z)-h(y))
\bigg]\A\,|z-y|^{-d-\alpha}\,dz.
\end{align*}
\kb{By Lemma~\ref{lem:Fof4var} with $a=u(y)$, $s=h(y)$, $b=u(z)$, $t=h(z)$, 
the above equals}
\begin{align*}
&
\int_\Rd
h(z)\, {F}(u(y)/h(y), u(z)/h(z))\,
\,\A\,|z-y|^{-d-\alpha}dz\,.
\end{align*}
This gives the result for $p\geq 2$.
If $1<p<2$ then we argue as follows. 
\kb{Let $\varepsilon>0$.}
By $\alpha$-harmonicity \kb{of $u$ and $h$}, $\Delta^{\alpha/2}\left({|u+i\varepsilon h|^ph^{1-p}}\right)(y)$ equals
\begin{align}
&
\lim\limits_{\eta\to 0^+} \!\!\!\!\!\!
\int\limits_{\{z\in \Rd:\, |z-y|>\eta\}}\!\!\!
\bigg[
\frac{|u(z)+i\varepsilon h(z)|^p}{h(z)^{p-1}}-  
\frac{|u(y)+i\varepsilon h(
\kb{y})|^p}{h(y)^{p-1}}\nonumber\\\nonumber
& - \frac{p|u(y)+i\varepsilon h(y)|^{p-2} u(y)}{h(y)^{p-1}}(u(z)-u(y))
\kb{-
\frac{p|u(y)+i\varepsilon h(y)|^{p-2} \varepsilon^2}{h(y)^{p-2}}(h(z)-h(y))}\\\nonumber
&
+\frac{(p-1)|u(y)+i\varepsilon h(y)|^p}{h(y)^p}(h(z)-h(y))
\bigg]\A\,|z-y|^{-d-\alpha}\,dz\,.
\end{align}
\kb{By Corollary~\ref{cor:Fof4var} with $a=u(y)$, $s=h(y)$, $b=u(z)$, $t=h(z)$, the above equals} 
\begin{align*}
&
\int_\Rd
h(z)\, {F}_\varepsilon(u(y)/h(y), u(z)/h(z))\,
\,\A\,|z-y|^{-d-\alpha}dz\,.
\end{align*}
\kb{By (\ref{eq:rphi}) we get
\begin{align*}
E_{x_0}\frac{|u(X_{\tau_U})+i\varepsilon h(X_{\tau_U})|^p}{h(X_{\tau_U})^{p-1}}= &
\frac{|u(x_0)+i\varepsilon h(x_0)|^p}{h(x_0)^{p-1}} \nonumber\\
& +\int_U G_U(x_0,y)\int_\Rd
{F}_\varepsilon\left(\frac{u(y)}{h(y)}, \frac{u(z)}{h(z)}\right)
\,\A\,\frac{h(z)dzdy}
{|z-y|^{d+\alpha}}\,. 
\end{align*}
We then proceed as in the proof of Lemma~\ref{lem:lF}, \kb{letting $\varepsilon \to 0$}, using (\ref{eq:Feab})}, 
\kb{Remark~\ref{rem:dct}, and the assumed finiteness of $\E_{x_0}|u(X_{\tau_U})|^p/h(X_{\tau_U})^{p-1}$ and $\E_{x_0}h(X_{\tau_U})$.}
\qed \end{proof}

\begin{thm}\label{thm:t2}
 Let $1<p<\infty$. For singular $\alpha$-harmonic \kb{functions} $u$ on $D$,  $\|u\|^p_{\HH^p_h}$ is comparable with
  \begin{equation*} 
\frac{|u(x_0)|^p}{h(x_0)^p} +
\int_D \frac{G_D(x_0,y)}{h(x_0)h(y)}\int_\Rd
     \Big(\frac{|u(z)|}{h(z)}\vee \frac{|u(y)|}{h(y)}\Big)^{p-2}
\left[\frac{u(z)}{h(z)}-\frac{u(y)}{h(y)}\right]^2
\frac{h(z)dz\, h^2(y)dy}{h(y)|z-y|^{d+\alpha}}\,.
  \end{equation*}
\end{thm}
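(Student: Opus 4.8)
The plan is to proceed exactly as in the proof of Theorem~\ref{thm:t1}, but in the conditional setting, replacing the role of Lemma~\ref{lem:lF} by Lemma~\ref{lem:rhp} and the role of the comparison (\ref{elem-ineq}) by its homogeneous version (\ref{eq:asF}). Starting from Definition~\ref{def:cHn}, I would write
\[
\|u\|^p_{\HH^p_h}=\frac{1}{h(x_0)}\sup_{U\subset\subset D}\E_{x_0}\frac{|u(X_{\tau_U})|^p}{h(X_{\tau_U})^{p-1}}\,,
\]
and then substitute, for each fixed $U\subset\subset D$, the exact identity furnished by Lemma~\ref{lem:rhp}.

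First I would pass to the limit $U\uparrow D$. The integrand $F(u(y)/h(y),u(z)/h(z))$ is nonnegative and $G_U(x_0,y)\uparrow G_D(x_0,y)$, so the double integral in Lemma~\ref{lem:rhp} is nondecreasing in $U$; by monotone convergence the supremum over $U$ equals the same expression with $G_U$ and $U$ replaced by $G_D$ and $D$. Next I would invoke the pointwise comparison (\ref{eq:asF}), namely $F(a/s,b/t)\asymp (b/t-a/s)^2(|b|/t\vee|a|/s)^{p-2}$ with the constants $c_p$, $C_p$ of (\ref{elem-ineq-b1}), applied with $a=u(y)$, $s=h(y)$, $b=u(z)$, $t=h(z)$. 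Since both sides are nonnegative and the constants are uniform, integrating against the nonnegative kernel $G_D(x_0,y)\,\A\,h(z)\,|z-y|^{-d-\alpha}\,dz\,dy$ preserves the comparison and yields
\[
\sup_{U\subset\subset D}\E_{x_0}\frac{|u(X_{\tau_U})|^p}{h(X_{\tau_U})^{p-1}}\asymp\frac{|u(x_0)|^p}{h(x_0)^{p-1}}+\int_D G_D(x_0,y)\int_\Rd\Big(\frac{|u(z)|}{h(z)}\vee\frac{|u(y)|}{h(y)}\Big)^{p-2}\Big[\frac{u(z)}{h(z)}-\frac{u(y)}{h(y)}\Big]^2\A\,\frac{h(z)\,dz\,dy}{|z-y|^{d+\alpha}}\,.
\]

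Finally I would divide through by $h(x_0)$, turning the leading term into $|u(x_0)|^p/h(x_0)^p$ and producing an overall factor $1/h(x_0)$ in front of the Green integral. The only remaining task is bookkeeping of the $h$-factors to match the stated display: writing $1/h(x_0)=\tfrac{1}{h(x_0)h(y)}\cdot\tfrac{h^2(y)}{h(y)}$ recasts the prefactor as $G_D(x_0,y)/(h(x_0)h(y))$ together with the measure $h(z)\,dz\,h^2(y)\,dy/(h(y)|z-y|^{d+\alpha})$, which is precisely the form in the theorem, the constant $\A$ being absorbed into the comparability constants. I do not anticipate a genuine obstacle here: the substance is already contained in Lemma~\ref{lem:rhp} and the comparison (\ref{eq:asF}), and the one point deserving a little care is confirming the monotonicity of the double integral in $U$, so that monotone convergence legitimately converts the supremum into the integral over all of $D$.
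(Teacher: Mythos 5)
Your proposal is correct and follows essentially the same route as the paper's own proof, which consists precisely of combining Lemma~\ref{lem:rhp} with the comparison (\ref{eq:asF}) and (implicitly) monotone convergence as $U\uparrow D$, recording the resulting exact identity (\ref{eq:HpDh}). Your more detailed bookkeeping of the $h$-factors and the justification of the passage from the supremum over $U\subset\subset D$ to the integral over $D$ are exactly the steps the paper leaves tacit.
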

\begin{proof}
The result follows from Lemma~\ref{lem:rhp} and (\ref{eq:asF}).
In fact,
  \begin{equation}
    \label{eq:HpDh}
    \|u\|_{\HH^p_h}^p=
\frac{|u(x_0)|^p}{h(x_0)^p}
+ \!\int_{D}\! \frac{G_{\kb{D}}(x_0,y)}{h(x_0)h(y)}
 \int_\Rd\!\!\! F\left(\frac{u(y)}{h(y)}, \frac{u(z)}{h(z)}\right)
\A\,\frac{h(z)dz\, h^2(y)dy}{h(y)|z-y|^{d+\alpha}}\,.
  \end{equation}
\qed \end{proof}
We remark in passing that for $h\equiv 1$ we obtain $\HH^p_h=\HH^p$. To rigorously state this observation, 
one should discuss conditioning by functions $h$ with nontrivial values on $D^c$.
\kb{In this connection we note that \cite{MR1654115} suggest that the stopped (rather than the killed) process  should be used to this end (see also \cite[Remark~11]{MR2365478} and \cite[Chapter~11]{MR2152573}).}
We will not embark on this endeavor, instead in the next section we fully discuss the conditional Hardy spaces of a local operator, in which case the values of $h$ on $D^c$ are irrelevant.

\section{Classical Hardy spaces}\label{sec:clas}

Here we describe the Hardy spaces and the conditional Hardy spaces of harmonic functions of the Laplacian $\Delta=\sum_{j=1}^d \partial^2/\partial x_j^2$. 
The former case has been widely studied in the literature, mainly for the ball and the half-space, but also for smooth and Lipschitz domains, see \cite{ MR1805196, 1998-Koosis, MR0290095, MR0473215, MR716504}. The characterization of the Hardy spaces in terms of  quadratic functions appeared in \cite{1961-Stein} and \cite{1967-Widman} for harmonic functions on the half-space in $\Rd$. The case of $D$ being the unit ball was studied in detail in \cite{2004-Stevic, 2009-Pavlovic}. For more general domains in $\Rd$ see \cite{MR2279769, MR0473215, MR716504}. 

Throughout this section we 
assume that $D\kb{\subset \Rd}$ is \kb{open and} {\it connected}, i.e. it is a domain, \kb{and $x_0\in D$}. For $0<p<\infty$, the classical Hardy space $H^p(D)$ may be defined as the family of all those functions $u$ on $D$ which are harmonic on $D$ (i.e. $u\in C^2(D)$ and $\Delta u(x)=0$ for $x\in D$) and satisfy
\[
\|u\|_{H^p}:=\sup_{U\subset\subset D}\bigg(\E_{x_0}|u(W_{\tau_U})|^p\bigg)^{1/p}<\infty.
\]
Here $W$ is the Brownian motion on $\Rd$ and $\tau_U=\inf\{t\geq 0:\,W_t\notin D\}$. 
For a positive harmonic function $h$ on $D$ and $0<p<\infty$ we consider the space $H^p_h(D)$ of all those functions $u$ harmonic on $D$ which  satisfy
\[
\|u\|^p_{H^p_h}:=\sup_{U\subset\subset D} \E^h_{x_0} \left| \frac{u(W_{\tau_U})}{h(W_{\tau_U})}
\right|^p =\frac{1}{h(x_0)}\sup_{U\subset\subset D}\E_{x_0}\frac{|u(W_{\tau_U})|^p}{h(W_{\tau_U})^{p-1}}<\infty,
\]
where $\E^h_x$ is the expectation for the conditional Brownian motion (compare Section~\ref{sec:hhp} or see \cite{1984-Doob}). Let $G_D$ be the classical Green function of $D$ for $\Delta$. If $1<p<\infty$ and $u$ is harmonic on $D$, then the following Hardy-Stein identity holds
\begin{equation}\label{eq:chp}
\|u\|^p_{H^p}=|u(x_0)|^p+p(p-1)\int_DG_D(x_0,y)|u(y)|^{p-2}|\nabla u(y)|^2dy.
\end{equation}
The identity (\ref{eq:chp}) obtains by taking $h\equiv 1$ in the next theorem. (\ref{eq:chp}) generalizes \cite[Lemma 1]{2004-Stevic} and \cite[Theorem 4.3]{2009-Pavlovic}, where the formula was given for the ball in $\Rd$, see also \cite{1099-PS-jlms}. We note that (\ref{eq:chp}) is implicit in \cite[Lemma 6]{MR2279769}, but apparently the identity did not receive enough attention for general domains.  

\kb{If sharp two-sided estimates 
of $G_D$ are known,
then} we obtain explicit estimate \kb{for $\|u\|_{H^p}$}. For instance, if $D$ is a bounded $C^{1,1}$ domain in $\R^d$ and $d\geq 3$, then $G_D(x_0,y)\asymp\delta_D(y)|y-x_0|^{2-d}$, where $\delta_D(y):={\rm dist}(y, D^c)$,  
\kb{ see \cite{MR0239264, MR842803} or \cite{MR1741527}}.
\kb{For Lipschitz domains we also refer to \cite{MR1741527}.}

\begin{thm}\label{thm:chp2}
If $1<p<\infty$ and $u$ is harmonic on $D$, then
\begin{equation}\label{eq:cHS}
\|u\|^p_{H^p_h}=\frac{|u(x_0)|^p}{h(x_0)^p}+p(p-1)\int_D\frac{G_D(x_0,y)}{h(x_0)h(y)}\;\left|\frac{u(y)}{h(y)}\right|^{p-2}\left|\nabla \frac{u}{h}(y)\right|^2\; h^2(y)dy.
\end{equation}
\end{thm}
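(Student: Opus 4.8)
The plan is to mirror the argument of Lemma~\ref{lem:rhp} and Theorem~\ref{thm:t2}, with the nonlocal operator $\Delta^{\alpha/2}$ replaced by the local operator $\Delta$, so that the inner integral against $\A|z-y|^{-d-\alpha}\,dz$ collapses to a pointwise gradient term. I would start from the Green--Dynkin representation analogous to (\ref{eq:r}),
\[
\E_x\phi(W_{\tau_U})=\phi(x)+\int_U G_U(x,y)\,\Delta\phi(y)\,dy,
\]
valid for $\phi$ that is $C^2$ in a neighborhood of $\overline{U}$ (the normalization of $W$ and $G_U$ being fixed so that no extra constant appears, consistent with the factor $p(p-1)$ already present in (\ref{eq:chp})). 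Taking $\phi=|u|^p/h^{p-1}$, dividing by $h(x_0)$, and then letting $U\uparrow D$ reduces everything to computing $\Delta\big(|u|^p h^{1-p}\big)$ on $D$. Writing $r=u/h$, which is smooth on $D$ since $h>0$ and $u,h\in C^2(D)$, and noting $|u|^p h^{1-p}=|r|^p h$, the crucial identity I would establish is
\[
\Delta\!\left(\frac{|u|^p}{h^{p-1}}\right)=p(p-1)\,|r|^{p-2}\,|\nabla r|^2\,h\qquad\text{on }D.
\]

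First I would prove this for $p\ge 2$, where $|r|^p\in C^2(D)$. Using $\Delta h=0$, the product rule gives $\Delta(|r|^p h)=(\Delta|r|^p)\,h+2\,\nabla|r|^p\cdot\nabla h$, while the chain rule gives $\nabla|r|^p=p|r|^{p-2}r\,\nabla r$ and $\Delta|r|^p=p(p-1)|r|^{p-2}|\nabla r|^2+p|r|^{p-2}r\,\Delta r$. Substituting and collecting the terms carrying the factor $p|r|^{p-2}r$ produces the combination $(\Delta r)\,h+2\,\nabla r\cdot\nabla h=\Delta(rh)=\Delta u$, which vanishes because $u$ is harmonic. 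Only $p(p-1)|r|^{p-2}|\nabla r|^2 h$ survives, which is the claimed identity. Inserting it into the representation formula and dividing by $h(x_0)$ gives (\ref{eq:cHS}) with $G_U$ in place of $G_D$, after observing that $\frac{G_U(x_0,y)}{h(x_0)}\,h(y)=\frac{G_U(x_0,y)}{h(x_0)h(y)}\,h^2(y)$; since the integrand is nonnegative, monotone convergence along $G_U\uparrow G_D$ (i.e.\ $U\uparrow D$) finishes this case.

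The range $1<p<2$ is where the main obstacle lies, because $|r|^{p-2}$ is singular on the zero set $\{u=0\}$ and $|u|^p/h^{p-1}$ need not be $C^2$ there. I would resolve this exactly as in Lemma~\ref{lem:rhp}, regularizing with $u+i\varepsilon h$, that is replacing $r$ by $r+i\varepsilon$ and working with the smooth function $|u+i\varepsilon h|^p/h^{p-1}=(r^2+\varepsilon^2)^{p/2}h$. The same product- and chain-rule computation, again using $\Delta u=\Delta h=0$, yields
\[
\Delta\!\left(\frac{|u+i\varepsilon h|^p}{h^{p-1}}\right)=p\,(r^2+\varepsilon^2)^{p/2-2}\big[(p-1)r^2+\varepsilon^2\big]\,|\nabla r|^2\,h,
\]
which is nonnegative and, since $(p-1)r^2+\varepsilon^2\le r^2+\varepsilon^2$ for $p<2$, is dominated by $p\,(r^2+\varepsilon^2)^{p/2-1}|\nabla r|^2 h\le p\,|r|^{p-2}|\nabla r|^2 h$. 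For a fixed $U\subset\subset D$ all the quantities involved are finite, since $W_{\tau_U}$ stays in the compact set $\overline{U}\subset D$ where $u$ is bounded and $h$ is bounded below, and the dominating density $G_U(x_0,\cdot)\,|r|^{p-2}|\nabla r|^2 h$ is integrable because the singularity $|r|^{p-2}$ is locally integrable for $p>1$. Hence I can let $\varepsilon\to 0$ by dominated convergence on both sides (the left side converging from $|u+i\varepsilon h|^p\le C(|u|^p+h^p)$ for $\varepsilon\le 1$), recovering for every $U$ the identity of the previous paragraph, and then pass to $U\uparrow D$ by monotone convergence to obtain (\ref{eq:cHS}), with both sides allowed to be $+\infty$ simultaneously. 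The only genuinely delicate point throughout is this interchange of $\varepsilon\to 0$ with integration near $\{u=0\}$; the rest is the same product- and chain-rule bookkeeping that made the case $p\ge2$ transparent.
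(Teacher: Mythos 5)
Your route is the paper's own: the Green--Dynkin formula (\ref{eq:crphi}) applied to $\phi=|u|^p h^{1-p}$, the pointwise identity $\Delta\big(|u|^p h^{1-p}\big)=p(p-1)|u/h|^{p-2}|\nabla(u/h)|^2 h$ derived from the product and chain rules together with $\Delta u=\Delta h=0$ (this is Lemma~\ref{lem:Duhp}; note that your regularized formula correctly carries the factor $h$ that is absent from the paper's display (\ref{eq:Duhpe}), evidently a typo there), the substitute $u+i\varepsilon h$ for $1<p<2$ (Lemma~\ref{lem:chp}), and monotone convergence as $U\uparrow D$. The case $p\geq 2$ is complete and correct; organizing the first-order terms into $p|r|^{p-2}r\,\big[(\Delta r)h+2\nabla r\circ\nabla h\big]=p|r|^{p-2}r\,\Delta(rh)=0$ is a tidy way to run the paper's computation.

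There is, however, a genuine gap in the step you yourself single out, the passage $\varepsilon\to 0$ for $1<p<2$. You justify dominated convergence by claiming that $G_U(x_0,\cdot)\,|r|^{p-2}|\nabla r|^2 h$ is integrable ``because the singularity $|r|^{p-2}$ is locally integrable for $p>1$''. That claim is false in general: near a zero of order $k$ of $u$ one has $|u|\asymp\rho^{k}$ up to an angular factor, and for instance $u(x)=\re\big((x_1+ix_2)^{3}\big)$, harmonic on all of $\Rd$, gives $|u|^{p-2}\asymp \rho^{3(p-2)}\,|\cos 3\theta|^{p-2}$ near the origin, which fails to be locally integrable whenever $3(2-p)\geq 2$, i.e.\ for all $1<p\leq 4/3$. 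The function you actually need to integrate, $|r|^{p-2}|\nabla r|^2$, \emph{is} locally integrable --- the vanishing of $|\nabla r|$ at higher-order zeros compensates --- but your stated reason does not prove this, and a direct proof would require nontrivial facts about nodal and critical sets of harmonic functions. The paper's proof sidesteps the issue entirely, and this is the missing idea: apply Fatou's lemma to the regularized identity itself. Since $u$ and $h$ are continuous and $h$ is bounded below on the compact set $\overline U$, the expectations $\E_{x_0}\big[|u+i\varepsilon h|^p h^{1-p}\big](W_{\tau_U})$ converge to the finite limit $\E_{x_0}\big[|u|^p h^{1-p}\big](W_{\tau_U})$, so Fatou applied to the nonnegative integrands yields
\begin{equation*}
\int_U G_U(x_0,y)\,\Big|\frac{u(y)}{h(y)}\Big|^{p-2}\Big|\nabla\frac{u}{h}(y)\Big|^2 h(y)\,dy<\infty .
\end{equation*}
Only after this finiteness is established does your pointwise bound $p(r^2+\varepsilon^2)^{p/2-2}\big[(p-1)r^2+\varepsilon^2\big]\leq p|r|^{p-2}$ become usable for domination, and dominated convergence then gives the identity on each $U$; the rest of your argument (monotone convergence in $U$) stands. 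With this substitution your proof is complete and coincides with the paper's.
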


The remainder of this section is devoted to the proof of Theorem~\ref{thm:chp2}. The reader interested mostly in (\ref{eq:chp}) is encouraged to carry out similar but simpler calculations for $h\equiv 1$ and $p>2$. We note that (\ref{eq:cHS}) is quite more general than (\ref{eq:chp}) because usually $u/h$ is not harmonic. The same remark concerns (\ref{eq:Duhpe}, \ref{eq:Duph2}) for general $h$ as opposed to (\ref{eq:Duhpe}, \ref{eq:Duph2}) for $h=1$, which is a classical result (\cite[VII.3]{MR0290095}).
We start with the following well-known Green-type equality.
Consider an open set $U\subset \subset D$ and a real-valued function $\phi:\,\Rd\to \R$ which is $C^2$ in a neighborhood of $\overline{U}$.
Then $\Delta\phi$ is bounded on $\overline{U}$, and
for every $x\in D$,
\begin{equation}\label{eq:crphi}
\phi(x)=\E_x \phi(W_{\tau_U})-\int_U G_U(x,y) \Delta\phi(y)\,dy\,,
\end{equation}
see, e.g., \cite[p. 133]{1965-Dynkin} for the proof.

\begin{lem}\label{lem:Duhp}
Let $\varepsilon\neq 0$ \kb{and $p>1$}, and let $u$ be harmonic on $D$. We have
\begin{equation}\label{eq:Duhpe}
\Delta \bigg[\bigg(\frac{u^2}{h^2}+\varepsilon^2\bigg)^{p/2}h\bigg]=p\bigg(\frac{u^2}{h^2}+\varepsilon^2\bigg)^{(p-4)/2}\bigg[(p-1)\frac{u^2}{h^2}+\varepsilon^2\bigg]\bigg|\nabla \frac{u}{h}\bigg|^2 h\,.
\end{equation}
If $u\neq 0$ or $p\geq 2$, then 
\begin{equation}\label{eq:Duph2}
\Delta\bigg(\frac{|u|^p}{h^{p-1}}\bigg)=p(p-1)\bigg|\frac{u}{h}\bigg|^{p-2}\bigg|\nabla\frac{u}{h}\bigg|^2h\,.
\end{equation}
\end{lem}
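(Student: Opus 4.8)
The plan is to introduce the ratio $r=u/h$ and to work with the conjugated operator $L^h f:=\frac{1}{h}\Delta(hf)$, so that the left-hand side of \eqref{eq:Duhpe} becomes $h\,L^hG(r)$ with $G(s)=(s^2+\varepsilon^2)^{p/2}$. The decisive observation is that $r$ is $L^h$-harmonic: since $u$ is harmonic, $L^h r=\frac{1}{h}\Delta(h\cdot u/h)=\frac{1}{h}\Delta u=0$. Moreover, because $\Delta h=0$, the product rule $\Delta(hf)=(\Delta h)f+2\nabla h\cdot\nabla f+h\,\Delta f$ shows that $L^h=\Delta+\frac{2}{h}\nabla h\cdot\nabla$, i.e. $L^h$ is $\Delta$ perturbed by a first-order drift.

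First I would apply the chain rule to $G(r)$. For any operator of the form $\Delta+b\cdot\nabla$ one has $(\Delta+b\cdot\nabla)G(r)=G''(r)|\nabla r|^2+G'(r)(\Delta+b\cdot\nabla)r$. Taking $b=\frac{2}{h}\nabla h$ and using $L^h r=0$, the first-order term drops out, so that
\[
\Delta\!\left[\Big(\tfrac{u^2}{h^2}+\varepsilon^2\Big)^{p/2}h\right]=h\,L^hG(r)=h\,G''(r)\,|\nabla r|^2 .
\]
A direct differentiation gives $G''(s)=p(s^2+\varepsilon^2)^{(p-4)/2}\big[(p-1)s^2+\varepsilon^2\big]$, and substituting $s=r=u/h$ yields \eqref{eq:Duhpe}. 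Equivalently, avoiding $L^h$ altogether, I would expand $\Delta[G(r)h]=(\Delta G(r))h+2\nabla G(r)\cdot\nabla h$ (using $\Delta h=0$), write $\Delta G(r)=G''(r)|\nabla r|^2+G'(r)\Delta r$, and eliminate $\Delta r$ via the identity $\Delta r=-\frac{2}{h}\nabla r\cdot\nabla h$, which follows from $0=\Delta u=\Delta(rh)=(\Delta r)h+2\nabla r\cdot\nabla h$; the two terms in $\nabla r\cdot\nabla h$ then cancel, leaving the same expression.

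For \eqref{eq:Duph2} I would let $\varepsilon\to 0$ in \eqref{eq:Duhpe}: the left-hand side tends to $|u|^p/h^{p-1}=|r|^p h$, while the right-hand side tends to $p(p-1)|r|^{p-2}|\nabla r|^2\,h$, which is the claim. Alternatively one takes $G(s)=|s|^p$ from the outset, with $G''(s)=p(p-1)|s|^{p-2}$, and repeats the computation of the previous paragraph verbatim.

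The one genuine subtlety, and the step I expect to need care, is the regularity required to apply $\Delta$ classically in \eqref{eq:Duph2}: the function $|u|^p/h^{p-1}=|r|^p h$ is $C^2$ on $D$ precisely under the stated hypothesis $u\neq 0$ or $p\geq 2$, since $s\mapsto|s|^p$ fails to be twice differentiable at $s=0$ when $1<p<2$. This is exactly why the regularized version \eqref{eq:Duhpe}, whose profile $(s^2+\varepsilon^2)^{p/2}$ is smooth for every $\varepsilon\neq 0$, is recorded separately: it holds everywhere on $D$ regardless of the zeros of $u$, and the passage $\varepsilon\to 0$ is deferred to the integral identities rather than performed at the level of the pointwise Laplacian. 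The remaining work — the chain- and product-rule bookkeeping and the evaluation of $G''$ — is routine.
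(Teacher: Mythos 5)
Your argument is correct, but it takes a genuinely different route from the paper's. The paper proves Lemma~\ref{lem:Duhp} by direct bookkeeping: it lists elementary differentiation identities (for $\nabla|u|^p$, $\partial^2|u|^p/\partial x_i^2$, $\nabla h^{1-p}$, the product rule for $\Delta(fg)$, and, for \eqref{eq:Duhpe}, explicit formulas for $\Delta(u/h)^2$ and $\Delta f^{p/2}$ with $f=u^2/h^2+\varepsilon^2$) and assembles them, using harmonicity of $u$ and $h$ only to discard the terms containing $\Delta u$ and $\Delta h$. You instead conjugate by $h$: since $\Delta h=0$, the operator $L^h=\frac{1}{h}\Delta(h\,\cdot)$ equals $\Delta+\frac{2}{h}\nabla h\cdot\nabla$, the ratio $r=u/h$ is $L^h$-harmonic, and the second-order chain rule for drift-perturbed Laplacians kills the first-order term $G'(r)L^hr$, leaving $\Delta[G(r)h]=h\,G''(r)|\nabla r|^2$ for any $C^2$ profile $G$. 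This is more structural: it explains why the result is a carr\'e-du-champ type expression weighted by $h$, it transfers verbatim to other diffusion generators, and it is precisely the ``conditional semigroup'' viewpoint that the paper advertises in the closing remark of Section~\ref{sec:clas} but does not actually deploy in this proof. Your fallback computation (eliminating $\Delta r=-\frac{2}{h}\nabla r\cdot\nabla h$) is essentially a streamlined version of the paper's calculation.

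Two caveats. First, your derivation yields $\Delta[G(r)h]=h\,G''(r)|\nabla r|^2$, i.e.\ with a factor $h$ on the right-hand side, whereas \eqref{eq:Duhpe} as printed lacks this $h$. That is a misprint in the lemma, not an error on your part: the paper's own proof ends with $\Delta(f^{p/2}h)=\frac{p(p-2)}{4}f^{p/2-2}|\nabla(u/h)^2|^2h+pf^{p/2-1}|\nabla(u/h)|^2h$, which carries the $h$, and the $h$ is also forced by consistency with \eqref{eq:Duph2} as $\varepsilon\to0$. Second, your first suggestion for \eqref{eq:Duph2} --- letting $\varepsilon\to0$ in \eqref{eq:Duhpe} --- is not a proof as stated: pointwise convergence of the regularized functions and of their Laplacians does not by itself identify $\Delta(|u|^ph^{1-p})$ with the limit; one would need locally uniform (or distributional) convergence. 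The paper accordingly records that convergence only as a remark after the lemma, and proves \eqref{eq:Duph2} by direct differentiation. Your alternative --- running the same chain-rule computation with $G(s)=|s|^p$, which is $C^2$ on all of $\R$ when $p\geq2$ and away from $s=0$ when $1<p<2$, exactly matching the hypothesis ``$u\neq0$ or $p\geq2$'' --- closes this gap and should be regarded as your actual proof of \eqref{eq:Duph2}.
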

\begin{proof}
Denote $u_i=\partial u/\partial x_i$, $h_i=\partial h/\partial x_i$,  $u_{ii}=\partial^2u/\partial x_i^2$ and $h_{ii}=\partial^2h/\partial x_i^2$, $i=1,\ldots,d$.
The lemma results from straightforward calculations based on the following observations:
\begin{align*}
\nabla |u|^p&=\nabla\big(u^2)^{p/2}=p|u|^{p-2}u\nabla u\,,
\quad \mbox{ if $p\geq 2$ or $u\neq 0$}\,,\\
\frac{\partial^2}{\partial x_i^2}|u|^p&=p(p-1)|u|^{p-2}u_i^2+p|u|^{p-2}uu_{ii}\,,
\quad \mbox{ if $p\geq 2$ or $u\neq 0$}\,,\\
\nabla h^{1-p}&=(1-p)h^{-p}\nabla h\,,\\
\Delta(fg)&=f\Delta g+2\nabla f\circ \nabla g+g\Delta f\,.
\end{align*}
This yields (\ref{eq:Duph2}) if $p\geq 2$ or $u(x)\neq 0$ at the point $x$ where the derivatives are calculated (and so $|u|^ph^{1-p}$ is of class $C^2$ there). To prove (\ref{eq:Duhpe}) we let $\varepsilon\neq 0$, denote $f(x)=u^2/h^2+\varepsilon^2$, and use a few more identities:
\begin{align*}
\nabla \frac{u}{h}&=\frac{\nabla u}{h}-\frac{u\nabla h}{h^2}\,, \quad 
\nabla \bigg(\frac{u}{h}\bigg)^2=2\frac{u}{h}\nabla \frac{u}{h}\,,\\
\Delta \bigg(\frac{u}{h}\bigg)^2&=\frac{2|\nabla u|^2}{h^2}-\frac{8u\nabla u \circ \nabla h}{h^3}+\frac{6u^2|\nabla h|^2}{h^4}\,,\\
\nabla f^{p/2}&=\frac{p}{2}f^{p/2-1}\nabla \bigg(\frac{u}{h}\bigg)^2\,,\\
\Delta f^{p/2}&=
\frac{p(p-2)}{4}f^{p/2-2}\bigg|\nabla\bigg(\frac{u}{h}\bigg)^2\bigg|^2
+\frac{p}{2}f^{p/2-1}\Delta\bigg(\frac{u}{h}\bigg)^2
\,,\\
\Delta \bigg(f^{p/2}h\bigg)&=
\frac{p(p-2)}{4}f^{p/2-2}\bigg|\nabla\bigg(\frac{u}{h}\bigg)^2\bigg|^2h
+pf^{p/2-1}\bigg|\nabla\frac{u}{h}\bigg|^2h\,. 
\end{align*} 
\qed \end{proof}
Noteworthy, we obtained nonnegative expressions in (\ref{eq:Duhpe}) and (\ref{eq:Duph2}). Also, if $\varepsilon\to 0$, then $\Delta [(u^2/h^2+\varepsilon^2)^{p/2}h]\to 
\Delta(|u|^ph^{1-p})$ almost everywhere on $D$.

\begin{lem}\label{lem:chp}
If $u$ is harmonic on $D$, $U\subset\subset D$  and $p>1$, then
\[
\E_{x_0}\frac{|u(X_{\tau_U})|^p}{h(X_{\tau_U})^{p-1}}=\frac{|u(x_0)|^p}{h(x_0)^{p-1}}+p(p-1)\int_UG_U(x_0,y)\bigg|\frac{u(y)}{h(y)}\bigg|^{p-2}\bigg|\nabla \frac{u}{h}(y)\bigg|^2h(y)dy.
\]
\end{lem}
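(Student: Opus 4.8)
The plan is to repeat the scheme of Lemma~\ref{lem:rhp}, using the classical Green-type identity (\ref{eq:crphi}) in place of (\ref{eq:rphi}) and Lemma~\ref{lem:Duhp} in place of Lemma~\ref{lem:Fof4var}. Accordingly I treat the cases $p\ge 2$ and $1<p<2$ separately, the second by an $\varepsilon$-regularization. Throughout, $W_{\tau_U}\in\partial U\subset\overline U\subset D$, and on the compact set $\overline U$ the functions $u$ and $h$ are smooth with $h$ bounded below by a positive constant.

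When $p\ge 2$ the function $\phi=|u|^p/h^{p-1}$ is of class $C^2$ in a neighbourhood of $\overline U$, since $|u|^p\in C^2$ for $p\ge 2$ and $h>0$ there. I would apply (\ref{eq:crphi}) directly with this $\phi$ and $x=x_0$, evaluate $\Delta\phi$ by (\ref{eq:Duph2}), and rearrange; this yields the claimed identity at once, with no approximation needed.

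For $1<p<2$ the only obstruction is that $|u|^p$ need not be $C^2$ at the zeros of $u$, so I regularize. Setting $\phi_\varepsilon=(u^2/h^2+\varepsilon^2)^{p/2}h\in C^2$ and applying (\ref{eq:crphi}) to $\phi_\varepsilon$ together with (\ref{eq:Duhpe}) gives
\[
\E_{x_0}\phi_\varepsilon(W_{\tau_U})=\phi_\varepsilon(x_0)+\int_U G_U(x_0,y)\,\Delta\phi_\varepsilon(y)\,dy .
\]
As $\varepsilon\to 0$ one has $\phi_\varepsilon(x_0)\to|u(x_0)|^p/h(x_0)^{p-1}$, while on $\partial U$ the functions $\phi_\varepsilon$ are uniformly bounded (for $\varepsilon\le 1$) and decrease to $|u|^p/h^{p-1}$, so bounded convergence sends the left-hand side to the finite number $\E_{x_0}|u(W_{\tau_U})|^p/h(W_{\tau_U})^{p-1}$.

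It remains to pass to the limit in the integral, which is the only delicate point. By (\ref{eq:Duhpe}) the integrand $\Delta\phi_\varepsilon$ is nonnegative, and by the remark following Lemma~\ref{lem:Duhp} together with (\ref{eq:Duph2}) it converges, off the Lebesgue-null set $\{u=0\}$, to $p(p-1)\,|u/h|^{p-2}\,|\nabla(u/h)|^2\,h$. Since the left-hand side and $\phi_\varepsilon(x_0)$ converge to finite limits, the integrals $\int_U G_U(x_0,\cdot)\,\Delta\phi_\varepsilon$ converge to a finite number, so Fatou's lemma first yields $\int_U G_U(x_0,y)\,|u/h|^{p-2}\,|\nabla(u/h)|^2\,h\,dy<\infty$. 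The elementary bound $(s^2+\varepsilon^2)^{(p-4)/2}[(p-1)s^2+\varepsilon^2]\le(s^2+\varepsilon^2)^{(p-2)/2}\le|s|^{p-2}$, valid for $1<p<2$ with $s=u/h$, then shows via the explicit form of (\ref{eq:Duhpe}) that $0\le\Delta\phi_\varepsilon\le p\,|u/h|^{p-2}\,|\nabla(u/h)|^2\,h$, a majorant integrable against $G_U(x_0,\cdot)$ by the finiteness just obtained. Dominated convergence identifies the limit of the integral with $p(p-1)\int_U G_U(x_0,y)\,|u/h|^{p-2}\,|\nabla(u/h)|^2\,h\,dy$, completing the proof. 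The main obstacle is precisely this bootstrap: the natural majorant is singular at the zeros of $u$, so its integrability cannot be assumed a priori and must be extracted from the finiteness of the left-hand side via Fatou before dominated convergence becomes legitimate.
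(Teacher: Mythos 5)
Your proof is correct and follows essentially the same route as the paper: for $p\ge 2$ the paper applies (\ref{eq:crphi}) to $|u|^p h^{1-p}$ together with (\ref{eq:Duph2}), and for $1<p<2$ it uses exactly your regularization---replacing $u$ by $u+i\varepsilon h$, which is the same as taking $\phi_\varepsilon=(u^2/h^2+\varepsilon^2)^{p/2}h$---followed by Fatou's lemma and dominated convergence. Your write-up simply makes explicit the Fatou-then-domination bootstrap (finiteness of $\int_U G_U(x_0,y)\,|u/h|^{p-2}|\nabla(u/h)|^2h\,dy$ first, then the majorant $p\,|u/h|^{p-2}|\nabla(u/h)|^2h$) that the paper's terse proof leaves implicit.
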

\begin{proof}
For $p\geq2$ we have $|u|^ph^{1-p}\in C^2(D)$ and the result follows from (\ref{eq:crphi}) and Lemma~\ref{lem:Duhp}. If $1<p<2$, then we consider  $u+i\varepsilon h$ in place of $u$ and we let $\varepsilon \to 0$. By (\ref{eq:Duhpe}), (\ref{eq:crphi})
and dominated convergence we obtain the result.
\qed \end{proof}
\begin{proof}[Proof of Theorem~\ref{thm:chp2}]
The conclusion follows from Lemma~\ref{lem:chp} and monotone convergence, after dividing by $h(x_0)$ and rearranging the integrand.
\qed \end{proof}
We observe very close similarities between the Hardy-Stein identities  and conditional Hardy-Stein identities discussed in this paper. Specifically, functions $u$ and $u/h$ undergo the same transformation under the integral sign. In each case we see the Green function (and jump kernels in the non-local case) appropriate for the given operator, and in the conditional case, $h^2(y)dy$ appears as a natural reference measure. 
We remark in passing that the framework of conditional semigroups (\ref{eq:cs}) should be convenient for such calculations in more general settings.

\section{Further results}\label{sec:F}

We now discuss the structure of $\HH^p$. 
We start with $p=1$. 
The following is a counterpart of the theorem of Krickeberg for martingales (\cite{MR745449}), and an extension of \cite[Theorem 1]{MR2606956},
where the result was proved for singular $\alpha$-harmonic functions 
on bounded Lipschitz \kb{open sets}.

\begin{lem}\label{lem:h1}
Let $u\in\HH^1$. There exist nonnegative functions $f$ and $g$ which are $\alpha$-harmonic on $D$, satisfy  $u=f-g$ and uniquely minimize $f(x_0)+g(x_0)$. In fact, $f(x_0)+g(x_0)=\|u\|_1$.  If $u$ is singular $\alpha$-harmonic on $D$, then so are $f$ and $g$.
If $1\leq p<\infty$ and $u\in\HH^p$, then $\|u\|^p_p=\|f\|^p_p+\|g\|^p_p$. 

\end{lem}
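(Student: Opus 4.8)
The plan is to reproduce Krickeberg's construction of the minimal decomposition through least $\alpha$-harmonic majorants, and then to read off the $L^p$ identity from the convergence of the underlying martingale to its boundary value. Since $u$ is $\alpha$-harmonic, $u^+=\max(u,0)$ and $u^-=\max(-u,0)$ are $\alpha$-subharmonic, so $U\mapsto \E_x u^{\pm}(X_{\tau_U})$ is nondecreasing along inclusion and $u^\pm(x)\le \E_x u^\pm(X_{\tau_U})$. I would set, for $x\in D$,
\[
f(x)=\sup_{U\subset\subset D}\E_x u^+(X_{\tau_U}),\qquad g(x)=\sup_{U\subset\subset D}\E_x u^-(X_{\tau_U}),
\]
and $f=u^+$, $g=u^-$ on $D^c$. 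Finiteness at $x_0$ follows from $\E_{x_0}u^{\pm}(X_{\tau_U})\le \E_{x_0}|u(X_{\tau_U})|\le \|u\|_1<\infty$, and finiteness throughout $D$ from the Harnack inequality. Both suprema are increasing limits, and since $\E_x u(X_{\tau_U})=u(x)$ for every $U$, subtracting the two limits gives $f-g=u$ on $D$ (and $f-g=u^+-u^-=u$ on $D^c$). Nonnegativity is clear, and $f\ge u^+$, $g\ge u^-$ by $\alpha$-subharmonicity.

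Next I would prove $\alpha$-harmonicity. Fix $V\subset\subset D$ and $x\in V$. For $V\subset U\subset\subset D$, the strong Markov property at $\tau_V$ (using $\tau_U\geq\tau_V$, and that on $\{X_{\tau_V}\notin U\}$ one has $X_{\tau_U}=X_{\tau_V}$) gives $\E_x u^+(X_{\tau_U})=\E_x[\psi_U(X_{\tau_V})]$, where $\psi_U(y)=\E_y u^+(X_{\tau_U})$ with the convention $\psi_U(y)=u^+(y)$ for $y\notin U$. Letting $U\uparrow D$ and applying monotone convergence, $\psi_U\uparrow f$ (already $\psi_U=u^+=f$ on $D^c$), so $f(x)=\E_x f(X_{\tau_V})$; thus $f$, and likewise $g$, is $\alpha$-harmonic on $D$. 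If $u$ is singular, then $u^{\pm}=0$ on $D^c$, hence $f=g=0$ on $D^c$, so $f$ and $g$ are singular $\alpha$-harmonic.

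For minimality, if $u=f'-g'$ with $f',g'\ge 0$ $\alpha$-harmonic, then $f'=u+g'\ge u$ and $f'\ge 0$, so $f'\ge u^+$; applying the mean-value property and taking the supremum over $U$ yields $f'\ge f$, and likewise $g'\ge g$. Hence $f'(x_0)+g'(x_0)\ge f(x_0)+g(x_0)$, so our pair minimizes, and equality forces $f'(x_0)=f(x_0)$, $g'(x_0)=g(x_0)$; then $f'-f\ge 0$ is $\alpha$-harmonic and vanishes at $x_0$, so $f'\equiv f$ by the Harnack inequality, giving uniqueness. Moreover $f(x_0)+g(x_0)=\lim_{U\uparrow D}\E_{x_0}[u^+(X_{\tau_U})+u^-(X_{\tau_U})]=\lim_{U\uparrow D}\E_{x_0}|u(X_{\tau_U})|=\|u\|_1$. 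The case $p=1$ of the last identity is then immediate: since $f,g\ge 0$ are $\alpha$-harmonic, $\|f\|_1=\E_{x_0}f(X_{\tau_U})=f(x_0)$ and $\|g\|_1=g(x_0)$, so $\|f\|_1+\|g\|_1=f(x_0)+g(x_0)=\|u\|_1$.

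For $1<p<\infty$ I would pass to terminal values along $U_n\uparrow D$: the nonnegative martingales $f(X_{\tau_{U_n}})$, $g(X_{\tau_{U_n}})$ and the martingale $u(X_{\tau_{U_n}})$ converge a.s.\ to $f_\infty,g_\infty,M_\infty$ with $f_\infty-g_\infty=M_\infty$. Boundedness in $L^p$ with $p>1$ makes $u^{\pm}(X_{\tau_{U_n}})$ uniformly integrable, whence $\E_{x_0}f_\infty\le f(x_0)=\lim_n\E_{x_0}u^+(X_{\tau_{U_n}})=\E_{x_0}M_\infty^+$; together with $f_\infty\ge M_\infty^+$ this forces $f_\infty=M_\infty^+$, and similarly $g_\infty=M_\infty^-$, so $f_\infty g_\infty=0$ and $|M_\infty|^p=f_\infty^p+g_\infty^p$. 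Uniform integrability also shows each martingale is closed by its terminal value, hence converges in $L^p$; taking expectations gives $\|u\|_p^p=\E_{x_0}|M_\infty|^p=\E_{x_0}f_\infty^p+\E_{x_0}g_\infty^p=\|f\|_p^p+\|g\|_p^p$. The hard part will be this last step: identifying the terminal values as $M_\infty^{\pm}$, verifying that the majorant martingales are closed (so that $f,g\in\HH^p$), and upgrading a.s.\ to $L^p$ convergence. This is where the genuine work lies, since the $L^p$ additivity is a purely boundary phenomenon — the disjointness $f_\infty g_\infty=0$ is invisible at any fixed $U$, where $f(X_{\tau_U})$ and $g(X_{\tau_U})$ are both strictly positive.
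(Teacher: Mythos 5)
Your construction of the decomposition coincides with the paper's: monotone limits of $\E_x u^{\pm}(X_{\tau_U})$, harmonicity via monotone convergence of the mean-value property, $f(x_0)+g(x_0)=\|u\|_1$, minimality and uniqueness by exactly the comparison you give; the singular case and the $p=1$ identity are likewise the same, so that half needs no comment. For the $L^p$ identity with $p>1$ you take a genuinely different route: you pass to terminal values along $U_n\uparrow D$, identify $f_\infty=M_\infty^+$ and $g_\infty=M_\infty^-$ (your Fatou argument for this identification is correct), and then want $L^p$ convergence of all three martingales. The step you yourself flag as ``where the genuine work lies'' is a real gap as written: the uniform integrability you established is that of $u^{\pm}(X_{\tau_n})$, and this does not by itself close the martingale $f(X_{\tau_n})$. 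To fill it, note that $f_n(X_{\tau_m})=\E\left[u^+(X_{\tau_n})\mid\mathcal{F}_{\tau_m}\right]$ (the strong Markov identity you already used), let $n\to\infty$ using monotone convergence on the left and $L^1$-convergence $u^+(X_{\tau_n})\to M_\infty^+$ (from UI) on the right, to get $f(X_{\tau_m})=\E\left[M_\infty^+\mid\mathcal{F}_{\tau_m}\right]$; since $M_\infty^+\in L^p$ by Fatou and $L^p$-boundedness of $u(X_{\tau_n})$, the closed martingale converges in $L^p$ and your chain of equalities follows. So your route can be completed, but it needs Doob's convergence theorem, de la Vall\'ee Poussin, and a closure argument.

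The paper's proof avoids the boundary entirely, and in fact refutes your closing remark that the additivity is ``a purely boundary phenomenon.'' The disjointness that drives the identity is visible at every finite level: it is the disjointness of the supports of $u^+$ and $u^-$, i.e. $u^+(X_{\tau_n})^p+u^-(X_{\tau_n})^p=|u(X_{\tau_n})|^p$ exactly. Conditional Jensen gives $f_n(x)^p\le\E_x\left[u^+(X_{\tau_n})^p\right]$ and similarly for $g_n$, hence for $m<n$ the tower property yields $\E_{x_0}\bigl(f_n(X_{\tau_m})^p+g_n(X_{\tau_m})^p\bigr)\le\E_{x_0}|u(X_{\tau_n})|^p\le\|u\|_p^p$; letting $n\to\infty$ by monotone convergence and then $m\to\infty$ gives $\|f\|_p^p+\|g\|_p^p\le\|u\|_p^p$. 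The reverse inequality is the pointwise bound $|f-g|^p\le f^p+g^p$ for nonnegative $f,g$. No terminal values, no uniform integrability, no maximal inequality. What your approach buys is extra information -- existence of the boundary limit $M_\infty$ and the exact relations $f_\infty=M_\infty^+$, $g_\infty=M_\infty^-$ -- but at the cost of machinery the statement does not require.
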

\begin{proof}
Let $U_n$ be open, $U_n\subset\subset U_{n+1}$ for $n=1,2\ldots$ and $\bigcup_nU_n=D$. Let $\tau_n=\tau_{U_n}$. We have
\[
\|u\|_1=\lim_{n\to\infty}\E_{x_0}|u(X_{\tau_n})|<\infty \,.
\]
Let $u^+=\max (u,0)$ and $u^-=\max(-u,0)$.
For $n=1,2,\ldots$, we define 
\[
f_n(x)=\E_x u^+(X_{\tau_n}) \,,\quad
g_n(x)=\E_x u^-(X_{\tau_n}) \,, \quad  x\in \Rd\,.
\]
Obviously, functions $f_n$ and $g_n$ are nonnegative on $\Rd$, and finite and $\alpha$-harmonic on $U_n$. We have $u=f_n-g_n$. Since $\tau_n\leq\tau_{n+1}$, for 
every $x\in \Rd$,
\begin{align*}
f_n(x)&=
\E_x\left[\E_{X_{\tau_n}}u(X_{\tau_{n+1}}) \,; \,u(X_{\tau_n})>0\right]
\leq
\E_x\left[\E_{X_{\tau_n}}u^+(X_{\tau_{n+1}})\right]
=f_{n+1}(x) \,,
\end{align*}
and $g_n(x)\leq g_{n+1}(x)$. We let $f(x)=\lim f_n(x)$ and  $g(x) =\lim g_n(x)$.
By the monotone convergence theorem, the mean value property (\ref{eq:eq:dharm}) holds for $f$ and $g$. We obtain
\[
 f(x_0)+g(x_0)=
\lim_{n\to\infty}\E_{x_0}|u(X_{\tau_n})|=\|u\|_1<\infty \,.
\]
In view of Harnack inequality we conclude that $f$ and $g$ are finite, hence $\alpha$-harmonic on $D$. Also, $u=f-g$. If $u$ vanishes on $D$, then so do $f$ and $g$. 
For the uniqueness, we observe that if $\tilde{f},\tilde{g}\geq 0$ are $\alpha$-harmonic on $D$, and $u=\tilde{f}-\tilde{g}$, then
$-\tilde{g}\le u\le \tilde{f}$, hence $f\leq \tilde{f}$ and $g\leq \tilde{g}$ by the construction of $f$ and $g$. Therefore $f(x_0)+g(x_0)\leq \tilde{f}(x_0)+\tilde{g}(x_0)$, and equality holds if and only if $f(x_0)=\tilde{f}(x_0)$ and $g(x_0)=\tilde{g}(x_0)$, henceforth $f=\tilde{f}$ and $g=\tilde{g}$. 

Let $p>1$ and suppose that $u\in\HH^p\subset \HH^1$. By Jensen\rq{}s inequality,
\[
f_n(x)^p\leq\E_x u^+(X_{\tau_n})^p,\quad g_n(x)^p\leq\E_x u^-(X_{\tau_n})^p,
\]
hence
\[
f_n(x)^p+g_n(x)^p\leq\E_x |u(X_{\tau_n})|^p.
\]
For $m<n$ we have
\[
\E_{x_0}(f_n(X_{\tau_m})^p+g_n(X_{\tau_m})^p)\leq\E_{x_0}\E_{X_{\tau_m}}|u(X_{\tau_n})|^p=\E_{x_0}|u(X_{\tau_n})|^p.
\]
Letting $n\to\infty$, we get
\[
\E_{x_0}(f(X_{\tau_m})^p+g(X_{\tau_m})^p)\leq\|u\|^p_p.
\]
Hence $\|f\|^p_p+\|g\|^p_p\leq\|u\|^p_p$. On the other hand, $f,g \geq 0$, hence
\[
\|u\|^p_p=\lim_{n\to\infty}\E_{x_0}|f(X_{\tau_n})-g(X_{\tau_n})|^p
\]
\[
\leq\lim_{n\to\infty}\E_{x_0}(f(X_{\tau_n})^p+g(X_{\tau_n})^p)=\|f\|^p_p+\|g\|^p_p.
\]
The proof is complete.
\qed \end{proof}
\noindent We note that $\|u\|^p_p=\|f\|^p_p+\|g\|^p_p$ has a trivial analogue for $L^p$ spaces.

\begin{lem}\label{lem:ch1}
Let $u\in\HH^1_h$. There are nonnegative functions $f, g\in \HH^1_h$ which satisfy $u=f-g$ and uniquely minimize $f(x_0)+g(x_0)$. In fact, $f(x_0)+g(x_0)=\|u\|_{\HH^1_h}h(x_0)$. If $1\leq p<\infty$ and $u\in\HH^p_h$, then $\|u\|^p_{\HH^p_h}=\|f\|^p_{\HH^p_h}+\|g\|^p_{\HH^p_h}$. 
\end{lem}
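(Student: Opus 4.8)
The plan is to repeat the proof of Lemma~\ref{lem:h1} word for word, but for the conditioned process, that is under $\E^h_{x_0}$, and for the ratio $r=u/h$ in place of $u$. Two facts make this legitimate. First, under the conditional law $X$ is again a strong Markov process, with semigroup $P^h$, and whenever $r$ is $h$-harmonic the family $\{r(X_{\tau_U})\}_{U\subset\subset D}$ is a martingale ordered by inclusion; note also that for $U\subset\subset D$ the conditional exit law is a genuine probability, since $\E^h_x \mathbf 1(X_{\tau_U})=\E_x h(X_{\tau_U})/h(x)=1$ by $\alpha$-harmonicity of $h$, so conditional Jensen applies. Second, nonnegative $h$-harmonic functions satisfy Harnack's inequality (the same one already used to make $\HH^p_h$ independent of $x_0$). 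Throughout I use the correspondence recalled in the excerpt: $r$ is $h$-harmonic on $D$ if and only if $r=u/h$ with $u$ singular $\alpha$-harmonic, and since $h>0$ on $D$ one has $r^\pm=u^\pm/h$.

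First I would fix an exhaustion $U_n\subset\subset U_{n+1}$ with $\bigcup_n U_n=D$, put $\tau_n=\tau_{U_n}$, and define
\[
\tilde f_n(x)=\E^h_x\, r^+(X_{\tau_n}),\qquad \tilde g_n(x)=\E^h_x\, r^-(X_{\tau_n}),
\]
which are nonnegative, $h$-harmonic on $U_n$, and satisfy $\tilde f_n-\tilde g_n=r$. As in Lemma~\ref{lem:h1}, the conditional tower property gives $\tilde f_n\le\tilde f_{n+1}$ and $\tilde g_n\le\tilde g_{n+1}$; I set $\tilde f=\lim_n\tilde f_n$, $\tilde g=\lim_n\tilde g_n$ and pass the conditional mean value property to the limit by monotone convergence. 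Since $\tilde f_n(x_0)+\tilde g_n(x_0)=\E^h_{x_0}|r(X_{\tau_n})|\uparrow\|u\|_{\HH^1_h}<\infty$, Harnack's inequality forces $\tilde f,\tilde g$ to be finite and hence $h$-harmonic on $D$, so that $f:=h\tilde f$ and $g:=h\tilde g$ are singular $\alpha$-harmonic with $u=f-g$. Because $\tilde f,\tilde g\ge0$ are $h$-harmonic, $\|f\|_{\HH^1_h}=\tilde f(x_0)$ and $\|g\|_{\HH^1_h}=\tilde g(x_0)$, whence $f,g\in\HH^1_h$ and $f(x_0)+g(x_0)=h(x_0)\|u\|_{\HH^1_h}$.

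For minimality and uniqueness I take any competing decomposition $u=f'-g'$ with $f',g'\in\HH^1_h$ nonnegative. Dividing by $h$ gives $r=(f'/h)-(g'/h)$ with the two summands nonnegative and $h$-harmonic, so $r^+\le f'/h$ and $r^-\le g'/h$ pointwise on $D$; since $f'/h$ is $h$-harmonic, $\tilde f_n(x)=\E^h_x r^+(X_{\tau_n})\le\E^h_x(f'/h)(X_{\tau_n})=(f'/h)(x)$, and letting $n\to\infty$ gives $\tilde f\le f'/h$, similarly $\tilde g\le g'/h$, i.e. $f\le f'$ and $g\le g'$. Thus $f(x_0)+g(x_0)\le f'(x_0)+g'(x_0)$, with equality forcing $\tilde f(x_0)=\tilde f'(x_0)$ and $\tilde g(x_0)=\tilde g'(x_0)$; applying Harnack to the nonnegative $h$-harmonic differences $\tilde f'-\tilde f$ and $\tilde g'-\tilde g$ then gives $f=f'$, $g=g'$. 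For the $L^p$ identity with $u\in\HH^p_h\subset\HH^1_h$ I would use conditional Jensen, $\tilde f_n(x)^p\le\E^h_x(r^+)^p(X_{\tau_n})$ and likewise for $\tilde g_n$, so that $\tilde f_n(x)^p+\tilde g_n(x)^p\le\E^h_x|r(X_{\tau_n})|^p$; the tower property across $\tau_m<\tau_n$ followed by $n\to\infty$ and a supremum over $m$ yields $\|f\|^p_{\HH^p_h}+\|g\|^p_{\HH^p_h}\le\|u\|^p_{\HH^p_h}$, while the elementary bound $|a-b|^p\le a^p+b^p$ for $a,b\ge0$ and $p\ge1$, applied along $X_{\tau_n}$ to $a=\tilde f$, $b=\tilde g$, gives the reverse inequality.

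The routine steps (monotonicity, the Jensen and tower manipulations, the final elementary bound) are verbatim copies of Lemma~\ref{lem:h1} and cause no trouble. The one place that needs genuine care, and what I regard as the main obstacle, is the conditional bookkeeping: one must be confident that the strong Markov property, the martingale property of $h$-harmonic functions, and Harnack's inequality are all available for the conditioned process, and that the $\HH^p_h$ norm of a singular $\alpha$-harmonic $u$ really equals $\sup_U\E^h_{x_0}|r(X_{\tau_U})|^p$ with $r=u/h$, so that the decomposition built for $r$ under $\E^h$ translates faithfully into the asserted decomposition $u=f-g$ inside $\HH^p_h$.
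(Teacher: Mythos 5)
Your proposal is correct and is essentially the paper's own argument: the paper's proof explicitly tells the reader to verify the statement by ``following the previous proof and using the conditional expectation $\E^h$,'' which is exactly what you carry out, and your construction produces the very same decomposition as Lemma~\ref{lem:h1}, since $h\tilde f_n=f_n$ and $h\tilde g_n=g_n$ (because $u^\pm$ vanish on $D^c$, so $\E^h_x r^\pm(X_{\tau_n})=h(x)^{-1}\E_x u^\pm(X_{\tau_n})$). The only difference is one of economy: for the $p=1$ part the paper shortcuts the construction by noting that conditioning is trivial for $p=1$ (for singular $u$ one has $\|u\|_{\HH^1_h}=h(x_0)^{-1}\|u\|_1$), so existence, minimality and uniqueness follow by directly citing Lemma~\ref{lem:h1} rather than re-running its proof under the conditional law as you do.
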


\begin{proof}
If $u\in\HH^1_h$, then $u$ is singular $\alpha$-harmonic on $D$, $u\in\HH^1$ and $\|u\|_{\HH^1_h}=h(x_0)^{-1}\|u\|_1$ (conditioning is trivial for $p=1$). 
By Lemma~\ref{lem:h1}, $u$ has the Krickeberg decomposition $u=f-g$, and $f,g$ are nonnegative and singular $\alpha$-harmonic on $D$. In particular $\|f\|_{\HH^1_h}=f(x_0)/h(x_0)$ and $\|g\|_{\HH^1_h}=g(x_0)/h(x_0)$ are finite.
The reader may easily verify the rest of the statement of the lemma, following the previous proof and using the conditional expectation $\E^h$.
\qed \end{proof}

\begin{rem}\label{rem:h1}
Analogues of  Lemma~\ref{lem:h1} and Lemma~\ref{lem:ch1} are true for the classical Hardy spaces $H^p(D)$ and $H^p_h(D)$ for connected $D$. 
\end{rem}

As an application of (\ref{eq:chp}) we give a short proof of the following Littlewood-Paley type inequality (see \cite{2006-Pavlovic}, were the result was given for the ball in $\RR^2$).
Recall the notation $\delta_D(y)={\rm dist}(y, D^c)$.
\begin{prop}\label{prop:LP}
Consider a domain $D\subset \Rd $, and let $p\geq2$. For every function $u$ harmonic on $D$ we have 
\[
\|u\|^p_{H^p}-|u(x_0)|^p\geq p(p-1)d^{2-p}2^{1-p}\int_DG_D(x_0,y)\delta_D(y)^{p-2}|\nabla u(y)|^pdy\,.
\]
\end{prop}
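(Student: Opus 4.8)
The plan is to combine the Hardy--Stein identity (\ref{eq:chp}) with a classical gradient estimate for harmonic functions. First I would use (\ref{eq:chp}) to rewrite the left--hand side, so that the asserted inequality becomes
\[
p(p-1)\int_D G_D(x_0,y)\,|u(y)|^{p-2}|\nabla u(y)|^2\,dy\ \ge\ p(p-1)\,d^{2-p}2^{1-p}\int_D G_D(x_0,y)\,\delta_D(y)^{p-2}|\nabla u(y)|^p\,dy\,.
\]
Since $G_D\ge 0$ and $p(p-1)>0$, everything reduces to controlling the extra factor $|\nabla u(y)|^{p-2}$ carried by the right--hand side. Note also that if $u\notin H^p$ the left--hand side is $+\infty$ and there is nothing to prove, so I may assume $u\in H^p$, in which case all the integrals in sight are finite.

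The tool for the extra factor is the gradient bound: if $u$ is harmonic on a ball $B(y,r)\subset D$, then each $\partial_i u$ is harmonic, and by the mean value property together with the divergence theorem
\[
\partial_i u(y)=\frac{1}{|B(y,r)|}\int_{B(y,r)}\partial_i u(x)\,dx=\frac{1}{|B(y,r)|}\int_{\partial B(y,r)}u(x)\,\nu_i(x)\,dS(x)\,,
\]
so that $|\nabla u(y)|\le \frac{d}{r}\sup_{\partial B(y,r)}|u|$, because $|\partial B(y,r)|/|B(y,r)|=d/r$. Letting $r\uparrow \delta_D(y)$ yields $\delta_D(y)\,|\nabla u(y)|\le d\,M(y)$ with $M(y)=\sup_{\overline{B(y,\delta_D(y))}}|u|$. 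Raising $\delta_D(y)|\nabla u(y)|$ to the power $p-2\ge 0$ and multiplying by $|\nabla u(y)|^2$ gives
\[
d^{2-p}2^{1-p}\,\delta_D(y)^{p-2}|\nabla u(y)|^p\ \le\ 2^{1-p}\,M(y)^{p-2}\,|\nabla u(y)|^2\,,
\]
which already accounts for the constant $d^{2-p}2^{1-p}$. The whole statement is thus reduced to the single inequality
\[
2^{1-p}\int_D G_D(x_0,y)\,M(y)^{p-2}|\nabla u(y)|^2\,dy\ \le\ \int_D G_D(x_0,y)\,|u(y)|^{p-2}|\nabla u(y)|^2\,dy\,.
\]

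The hard part is this last step: replacing the local maximum $M(y)$ by the pointwise value $|u(y)|$ at the expense of the factor $2^{1-p}$. This cannot be done pointwise, since at a point where $u$ vanishes but $\nabla u$ does not the left integrand is strictly positive while $|u(y)|^{p-2}|\nabla u(y)|^2=0$; the comparison is genuinely global and must be carried out with the weight $G_D(x_0,y)\,dy$. The route I would pursue is to avoid the supremum altogether and instead use the sharper spherical--mean form $\delta_D(y)|\nabla u(y)|\le d\,|\partial B(y,\delta_D(y))|^{-1}\int_{\partial B(y,\delta_D(y))}|u|\,dS$, pass to an average of $|u|^{p-2}$ by Jensen's inequality (immediate for $p\ge 3$, while the range $2\le p<3$ requires an additional argument), and then try to transfer this average back onto the diagonal by Fubini against $G_D(x_0,\cdot)$ and the sub--mean value property of the nonnegative subharmonic function $|\nabla u|^2$. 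The delicate point throughout is that the gradient at $y$ and the values of $u$ near $y$ remain coupled, which is precisely what resists a purely local estimate. As a consistency check, for $p=2$ the factor $|\nabla u|^{p-2}$ disappears and the inequality holds at once with a factor $2$ to spare, matching the value $p(p-1)d^{2-p}2^{1-p}=1$ of the constant.
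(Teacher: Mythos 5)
Your opening moves (the Hardy--Stein identity (\ref{eq:chp}) plus the interior gradient estimate) parallel the paper's, but the proposal stalls exactly where you say it does: the inequality
\[
2^{1-p}\int_D G_D(x_0,y)\,M(y)^{p-2}|\nabla u(y)|^2\,dy\ \le\ \int_D G_D(x_0,y)\,|u(y)|^{p-2}|\nabla u(y)|^2\,dy
\]
is never proved, and the route you sketch for it (spherical means, Jensen, Fubini against $G_D$, subharmonicity of $|\nabla u|^2$) is not carried out. This is a genuine gap, not a technicality: for signed harmonic $u$ the ratio $M(y)/|u(y)|$ is unbounded near the zero set of $u$, so no pointwise comparison is available --- as you observe yourself --- and nothing in the proposal supplies the global mechanism needed in the integrated sense. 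Worse, your reduced inequality is \emph{stronger} than the proposition (since $M(y)\ge|u(y)|$ and $M(y)$ can greatly exceed $\delta_D(y)|\nabla u(y)|/d$), so you have replaced the problem by a harder one; you have also misassigned the role of the constant $2^{1-p}$, which in the correct argument does not absorb any sup-versus-pointwise discrepancy.

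The idea you are missing is the Krickeberg decomposition of Lemma~\ref{lem:h1} and Remark~\ref{rem:h1}: assuming $\|u\|_{H^p}<\infty$, write $u=f-g$ with $f,g\ge 0$ harmonic on $D$ and, crucially, $\|u\|^p_{H^p}=\|f\|^p_{H^p}+\|g\|^p_{H^p}$. For \emph{nonnegative} harmonic functions the gradient estimate is pointwise, $f(y)\ge |\nabla f(y)|\,\delta_D(y)/d$ (see \cite[Exercise 2.13]{MR737190}); it is exactly your spherical-mean computation, except that positivity lets you replace the sup over the sphere by the value $f(y)$ via the mean value property. Feeding this into (\ref{eq:chp}) applied to $f$ gives
\[
\|f\|^p_{H^p}-|f(x_0)|^p\ \ge\ p(p-1)\,d^{2-p}\int_D G_D(x_0,y)\,\delta_D(y)^{p-2}|\nabla f(y)|^p\,dy\,,
\]
and similarly for $g$. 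The constant $2^{1-p}$ then comes from Jensen's inequality $|\nabla u|^p\le 2^{p-1}\left(|\nabla f|^p+|\nabla g|^p\right)$, while the zeroth-order terms are handled by $|u(x_0)|^p\le f(x_0)^p+g(x_0)^p$ together with the norm additivity. Note that the obstruction you identified --- points where $u=0$ but $\nabla u\neq 0$ --- is precisely what the decomposition removes: $f$ and $g$ are positive there, so the pointwise bound never degenerates. To salvage your outline, insert the decomposition \emph{before} applying the gradient estimate, not after.
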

\begin{proof}
We may assume that $\|u\|_{H^p}<\infty$. In view of Lemma~\ref{lem:h1} and Remark~\ref{rem:h1}, $u=f-g$, where $f,g$ are positive and harmonic on $D$ and $\|u\|^p_{H^p}=\|f\|^p_{H^p}+\|g\|^p_{H^p}$. Clearly, $|u(x_0)|^p\leq f(x_0)^p+g(x_0)^p$, hence $\|u\|^p_{H^p}-|u(x_0)|^p\geq \|f\|^p_{H^p}-|f(x_0)|^p
+ \|g\|^p_{H^p}-|g(x_0)|^p$.
Furthermore, by Jensen's inequality,
\[
|\nabla u|^p\leq 2^{p-1}(|\nabla f|^p+|\nabla g|^p)\,.
\]
Recall the following gradient estimate for the nonnegative harmonic function~$f$, 
\[
 f(x)\geq |\nabla f(x)|\delta_D(x)/d\,, \quad x\in D\, ,
\]
(\cite[Exercise 2.13]{MR737190}, see also \cite{MR1683048}).
Here $d$ is the dimension. 
By (\ref{eq:chp}), 
\begin{align*}
\|f\|^p_{H^p}-|f(x_0)|^p&=p(p-1)\int_DG_D(x_0,y)|f(y)|^{p-2}|\nabla f(y)|^2dy\\
&\geq p(p-1)d^{2-p}\int_DG_D(x_0,y)\delta_D(y)^{p-2}|\nabla f(y)|^pdy\,,
\end{align*}
and a similar estimate holds for $g$.
\qed \end{proof}

{\bf Acknowledgements}:
We thank Rodrigo Ba\~nuelos and Jacek Zienkiewicz for comments on classical Hardy spaces and \kb{on} the present paper. We thank the referee for remarks and suggestions.
\def\cprime{$'$}

\end{document}